%
%
%


\documentclass{amsart}


\usepackage{graphicx}

\usepackage[cmtip,all]{xy}
\usepackage{color}
\usepackage{hyperref}

\newtheorem{theorem}{Theorem}[section]
\newtheorem{proposition}[theorem]{Proposition}

\newtheorem{lemma}[theorem]{Lemma}

\newtheorem{corollary}[theorem]{Corollary}

\theoremstyle{definition}
\newtheorem{definition}[theorem]{Definition}

\theoremstyle{remark}
\newtheorem{remark}[theorem]{Remark}

\numberwithin{equation}{section}

\begin{document}



\title[]{The Veronese Geometry of Dziobek Configurations and Generic Finiteness for Homogeneous Potentials}
\author[Dias]{Thiago Dias}

\address{Departamento de Matemática, Universidade Federal Rural de Pernambuco - Rua Dom Manuel de Medeiros s/n, 52171-900, Recife, Pernambuco, Brasil}
\email{thiago.diasoliveira@ufrpe.br}


\subjclass[2020]{ Primary 70F10, 70F15, 37N05, 14A10}
\date{December 2025}

\dedicatory{}


\begin{abstract}

The main contribution of this paper is the proof of the generic finiteness of Dziobek central configurations for a homogeneous potential and the derivation of a uniform upper bound for their number. By exploiting the isomorphism between the Veronese variety and the determinantal variety associated with the Dziobek conditions, we define the Dziobek-Veronese variety and apply the dimension of fibers theorem to analyze the projection from the space of configurations and masses to the space of masses. We prove that the fibers of this projection, representing the central configurations for a given mass vector, are finite for masses chosen outside a proper algebraic subvariety. Furthermore, we utilize that the Dziobek variety is defined by an intersection of quadrics to obtain a bound of Bezout type for the number of Dziobek configurations with fixed masses given by a power of $2$ with exponent quadratic in $n$. Unlike previous estimates tailored to specific potentials, this bound depends solely on the dimension $n$. For instance, for the four-body problem, our bound reduces to $8192$, which is lower than the bound of $8472$ established by Moeckel and Hampton. This suggests that the complexity of counting Dziobek configurations for generic masses is governed primarily by the ambient geometry of the configuration space, rather than by the non-linearity of the interaction potential.
\end{abstract}

\dedicatory{In memory of professor Antônio Carlos.}

\maketitle


\section{Introduction}






The $n$-body problem remains a persistent challenge in celestial mechanics and dynamical systems theory. Central configurations play a crucial role in this study because they provide the initial conditions for the only known explicit solutions to the problem. Furthermore, they determine the asymptotic behavior of total collisions (\cite{sundman1909probleme} and \cite[\S 333-339 and \S361-364]{wintner2014analytical}) and characterize the bifurcations that define the topology of the integral manifolds~\cite{albouy1993integral, cabral1973integral, mccord1998integral, smale1970topology}. Central configurations are invariant modulo rotation, translation, and dilation; hence, it is natural to consider classes of central configurations modulo these transformations. In recent decades, the study of these configurations has expanded beyond the classical Newtonian case to include power-law or quasi-homogeneous potentials (see, for example, \cite{diacu2006central, tang2024perturbing, wang2023centered, liu2023stacked, santos2019inverse}). In this work, we consider the class of potentials given by $U_a=\frac{1}{2a + 2} \sum_{i < j} m_i m_j \, r_{ij}^{2a + 2}$ if $a \ne -1$, or $U_a=\sum_{i < j} m_i m_j \log r_{ij}$ if $a = -1$.

The finiteness problem,  a longstanding open question proposed in 1918 by Chazy~\cite{chazy1918certaines}, which appeared in Wintner's book~\cite{wintner2014analytical} and was famously listed by Smale as the 6th problem for the 21st century~\cite{smale1998mathematical}, concerns the finiteness of the number of classes of central configurations for a fixed set of positive masses.

The finiteness of central configurations has been established for $n=3$ by Euler~\cite{euler1767motu} and Lagrange~\cite{lagrange1772essai}. Central configurations are typically classified according to their dimension, which corresponds to the dimension of the affine hull formed by the $n$ bodies. Central configurations of dimension $1$, $2$, and $3$ are called, respectively, collinear, planar, and spatial. For an arbitrary $n$, finiteness is known to hold for collinear configurations~\cite{moulton1910straight} and when the dimension of the configuration is $n-1$~\cite{saari1980role}. Hampton and Moeckel proved finiteness for the planar four-body problem by using BKK theory~\cite{hampton2006finiteness}. Since the general case remains unsolved, significant research effort has shifted toward the question of \emph{generic finiteness}: whether the number of central configurations is finite for a generic choice of mass parameters. This property was established for the planar $5$-body problem by Albouy and Kaloshin in~\cite{albouy2012finiteness}. More precisely, for mass parameters in the complement of a codimension $2$ hypersurface of the mass space, the number of planar central configurations is finite. This result relies on codifying the behavior of unbounded singular sequences of normalized central configurations going to infinity using bicolored graphs named $zw$-diagrams. In~\cite{chang2024toward} and~\cite{chang2025toward}, Chen and Ming address the planar six-body problem and extend the Albouy-Kaloshin theory by introducing a matrix algebra that enables the determination of the $zw$-diagrams for the six-body problem. Furthermore, they establish mass relations that eliminate 61 of the 85 diagrams found. In~\cite{hampton2011finiteness}, Hampton and Jensen proved generic finiteness for spatial central configurations of the $5$-body problem. The Albouy-Kaloshin method has been successfully applied to study the finiteness problem for vortices (see e.g., \cite{yu2023finiteness} and \cite{yu2025finiteness}) and for balanced central configurations ~\cite{wang2024finiteness}.

While the cases of dimension $1$ and $n-1$ are well understood, a natural progression is the study of Dziobek configurations, defined as $n$-body configurations spanning a dimension of $n-2$. In $1900$, Otto Dziobek~\cite{dziobek1900ueber} introduced the equations known in the literature as \emph{Dziobek equations} (Eq. 12 in the original text). This work also introduced mutual distances as variables adequate to study central configurations. Dziobek's equations were seldom utilized in the first half of the 20th century. However, in works such as~\cite{meyer1988bifurcations}, Meyer and Schmidt illustrated the power of this formulation in handling otherwise intractable problems, helping to promote the use of mutual distances and Dziobek equations. Moeckel~\cite{moeckel2001generic} derived the Dziobek equations using exterior product computations, while Albouy~\cite{albouy2003paper} obtained them by studying the kernel of the configuration matrix. Recently, Leandro~\cite{leandro2025moments} generalized the Dziobek theory to central configurations of arbitrary dimension by developing the classical theory of moments of weighted points. Since their original formulation, Dziobek configurations have attracted sustained interest within the celestial mechanics community, motivating a wide range of studies such as~\cite{albouy2008symmetry, almeida2004dziobek,  leandro2003finiteness, llibre2015note, macmillan1932permanent,   santos2023bifurcations}.

Dziobek configurations have been a fruitful ground for investigating generic finiteness. Moeckel~\cite{moeckel1985relative} used algebraic geometry techniques to establish generic finiteness for the four-body case. Later, he refined these methods for $n \geq 4$ in~\cite{moeckel2001generic}. More recently, the result of generic finiteness was generalized to semi-integer potentials in~\cite{dias2017new} using the Jacobian criterion.

A key geometric insight arises when revisiting Dziobek's original work. In~\cite{dziobek1900ueber}, Dziobek presented the condition for four bodies which appears in the literature as $s_{ij}s_{kl}-s_{ik}s_{jl} = 0$. We observe that these equations correspond exactly to the quadratic relations defining the \emph{Veronese variety} of degree 2. Named after Giuseppe Veronese (1854--1917) for his work with the Veronese surface, this embedding is a fundamental tool in algebraic geometry used to map a projective space $\mathbb{P}^{n}$ into a higher-dimensional projective space. Its significance lies in its role in linearizing geometric problems, effectively allowing hypersurfaces of degree $d$ to be studied as hyperplane sections of the Veronese variety.

In this work, we extend the study of generic finiteness to a broader class of homogeneous potentials $U_a$, where the interaction depends on the mutual distances raised to the power $2a$. This framework unifies the classical Newtonian $n$-body problem ($a = -3/2$) and, in the planar case, the Helmholtz $n$-vortex problem ($a = -1$, logarithmic potential), allowing us to treat these distinct physical systems under a common geometric formalism.

The main contribution of this paper is the proof of the generic finiteness of Dziobek central configurations for the potential $U_a$. By exploiting the isomorphism between the Veronese variety and the determinantal variety associated with the Dziobek conditions, we apply the dimension of fibers Theorem to analyze the projection from the space of configurations and masses to the space of masses. We prove that the fibers of this projection, representing the central configurations for a given mass vector, are finite for masses chosen outside a proper algebraic subvariety.

A notable consequence of our geometric approach is the derivation of an explicit upper bound for the number of central configurations, specifically $2^{\binom{n+1}{2}+n-1}$. This estimate is of Bezout type, arising naturally from the intersection of quadrics that define the Veronese variety. Our formula is uniform and independent of the potential parameter $a$. We observe that for the four-body problem, our bound yields $2^{13} = 8192$, which improves upon the estimate of $8472$ established by Moeckel and Hamptom in \cite{hampton2006finiteness}. This fact suggests that the counting problem is intrinsically governed by the  geometry of the configuration space.

The paper is organized as follows. Section \ref{sec:config} introduces the potential $U_a$, the equations of motion, and the concept of central configurations expressed through the shape variables $u_{ij}$. In Section \ref{sec:dzio}, we review the fundamentals of Dziobek configurations. Subsequently, Section \ref{sec:ver} develops the algebraic geometry framework, with particular attention to the Veronese map. Finally, Section \ref{sec:fin} establishes the dimension estimates necessary to prove our main finiteness result, and we derive the explicit Bezout-type bound for the number of Dziobek configurations.

\section{Central Configuration Equations}\label{sec:config}

Consider $n$ bodies with positions $x_1,...,x_n \in \mathbb{R}^d$  with masses positive mass $m_1,...,m_n \in \mathbb{R}$. Let $r_{ij}=\|x_i - x_j\|$ be the mutual distance between the bodies $x_i$ and $x_j$. In this work, we assume $r_{ij} \neq 0$. Suppose that the bodies are moving in $\mathbb{R}^{d}$ under the action of homogeneous potential

$$ U_a(x) = 
\begin{cases}
\displaystyle \frac{1}{2a + 2} \sum_{i < j} m_i m_j \, r_{ij}^{2a + 2}, & \text{if } a \in  \text{ and } a \ne -1, \\
\displaystyle \sum_{i < j} m_i m_j \log r_{ij}, & \text{if } a = -1.
\end{cases}
$$

In the case where \( a \neq -1 \), or when \( a = -1 \) and  \( d > 2 \), the equations of motion are given by:

\begin{equation}\label{eqcc}
  \ddot{x}_{i}= \sum_{j\neq i} m_j r_{ij}^{2a}(x_j - x_i), 
\end{equation}

The Newtonian $n$-body problem corresponds to $a=-3/2.$ The case in which $a=-1$ and $d=2$ corresponds the Helmholtz $n$-vortex problem and the equations of motion are given by

\[
\dot{q}_i = -K \sum_{j \ne i} m_j r_{ij}^{-2}(x_j - x_i),
\]

where 

\begin{equation*}
K = 
\begin{bmatrix}
0 & -1 \\
1 & 0
\end{bmatrix}.
\end{equation*}

\begin{definition}\label{defcc}
The vector $(x_1,...,x_n) \in (\mathbb{R}^{d})^n$ will be called \emph{configuration}. A configuration $x$ is said to be a \emph{central configuration} associated  with  the potential $U_a$, if there exists a nonzero constant $\lambda \ne 0$ such that
\begin{equation}
\sum_{i \ne j} m_i (x_i - x_j) r_{ij}^{2a} + \lambda(x_j - c) = 0, \quad \text{for } j = 1, \dots, n,
\end{equation}
where the point $c$ is the \emph{center of mass} given by
\[
c = \frac{1}{M}(m_1 x_1 + \cdots + m_n x_n),  
\]
and $M= m_1 + \cdots + m_n \ne 0$ denotes the \emph{total mass} of the system.
\end{definition}

\begin{definition}The \emph{dimension} of a central configuration $x$, denoted by $\delta(x)$ is the dimension of the smallest space that contains $x_1,...,x_n$.
\end{definition}


It is straightforward to show that $\lambda > 0$ (see \cite[p.~4]{moeckel2014lectures}. Define $r_{0}=(M^{-1}\lambda)^{-2a}$.  $x$ is a central configuration with masses $m_1, \dots, m_n$ if, and only if, $x$ satisfies the equations
\begin{equation}\label{eqccro}
\sum_{\substack{i = 1 \\ i \ne j}}^n m_i (r_{ij}^{2a}-r_{0}^{2a}) (x_i - x_j) = 0, \quad j = 1, \dots, n.
\end{equation}

Following \cite{chen2018strictly}, we eliminate $r_0$ by defining he set of dimensionless ``shape'' variables $u = (u_{ij})_{i < j}$ is defined by

\begin{equation}\label{defshape}
u_{ij} = \frac{ r_{ij}}{r_0} ,
\end{equation}

The system \eqref{eqccro} becomes

\begin{equation}\label{eqccshape}
\sum_{i = 1}^n m_i s_{ij}x_i = 0, \quad j = 1, \dots, n,
\end{equation}

where
\begin{equation}\label{sdef}
    s_{ij}=(u_{ij}^{2a}-1) \text{ and } m_{i}s_{ii}= -\sum_{j \neq i}m_js_{ij}
\end{equation}

\section{Dziobek Central Configurations}\label{sec:dzio}

A central configuration is called a \textit{Dziobek configuration} if $\delta(x)=n-2$. In this section, we provide a self-contained presentation of the theory necessary to establish our proof of generic finiteness. For excellent expositions on Dziobek central configurations, we refer the reader to the works of Rick Moeckel in \cite{moeckel2001generic} and \cite{moeckel2014lectures} and Alain Albouy \cite{albouy2003paper}

\begin{definition}
    The matrix configuration associated to a Dziobek central configuration is the $(n-1) \times n$ matrix given by:

$$X= \begin{pmatrix}
1 & \dots & 1 \\
x_{11} & \dots & x_{1n} \\
\vdots & \dots & \vdots \\
x_{(n-2)1} & \dots & x_{(n-2)n} 
\end{pmatrix}_{(n-1) \times n}$$
\end{definition}

It is easy to see that $\text{rank}(X)=\delta(x)-1$. Hence, we can take a unique generator for the kernel of $X$.




\begin{proposition}\label{propmsd}
Let $\Delta=(\delta_1, \dots, \delta_n)$ be a vector in the kernel of the configuration matrix of a Dziobek configuration. $\Delta$ has at least two nonzero entries and the following relation holds:
\begin{equation}\label{eqmsd}
    m_k s_{ik}\delta_l = m_l s_{il}\delta_k,
\end{equation}
for $i=1,\dots,n$ and $1 \leq k<l \leq n$.
\end{proposition}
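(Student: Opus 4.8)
The plan is to realize the generator $\Delta$ of the kernel of $X$ not just as an abstract kernel vector, but to show that the central configuration equations force a whole family of mass-weighted ``shape vectors'' to be proportional to it. First I would dispose of the claim that $\Delta$ has at least two nonzero entries. As noted just before the statement, the kernel of $X$ is one-dimensional, so $\Delta \neq 0$. Since the top row of $X$ consists entirely of ones, membership $\Delta \in \ker X$ gives $\sum_{i=1}^n \delta_i = 0$. If exactly one entry $\delta_j$ were nonzero, this sum would equal $\delta_j \neq 0$, a contradiction; hence at least two entries must be nonzero.

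For the relation \eqref{eqmsd}, the central step is the observation that, for each fixed column index $j$, the vector $D_j = (m_1 s_{1j}, \dots, m_n s_{nj})^{T}$ lies in $\ker X$. Pairing $D_j$ with the coordinate rows of $X$ reproduces exactly the central configuration equation \eqref{eqccshape}, namely $\sum_{i=1}^n m_i s_{ij}\, x_i = 0$. Pairing $D_j$ with the top row of ones produces $\sum_{i=1}^n m_i s_{ij}$, which vanishes: this is precisely the diagonal convention $m_j s_{jj} = -\sum_{i \neq j} m_i s_{ij}$ of \eqref{sdef} combined with the symmetry $s_{ij}=s_{ji}$. Thus $X D_j = 0$, and since $\ker X = \langle \Delta \rangle$ is one-dimensional, there exist scalars $c_j$ with $m_i s_{ij} = c_j \delta_i$ for all indices $i,j$.

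The relation then follows by exploiting symmetry. Using $s_{ik}=s_{ki}$ and applying the proportionality with first index $k$ and column $i$ gives $m_k s_{ik} = m_k s_{ki} = c_i \delta_k$, and likewise $m_l s_{il} = m_l s_{li} = c_i \delta_l$. Multiplying the first identity by $\delta_l$ and the second by $\delta_k$ yields $m_k s_{ik}\,\delta_l = c_i \delta_k \delta_l = m_l s_{il}\,\delta_k$, which is exactly \eqref{eqmsd}. The only point requiring care is the verification that the top-row pairing $\sum_i m_i s_{ij}$ vanishes, and that the proportionality extends consistently to the diagonal term $m_j s_{jj}$; everything else is linear algebra once the shape vectors are recognized as kernel elements. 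I expect the main conceptual obstacle to be precisely this recognition — reinterpreting the equilibrium equations \eqref{eqccshape} together with the bookkeeping convention \eqref{sdef} as the single statement $X D_j = 0$ — rather than any computation.
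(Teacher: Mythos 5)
Your proof is correct, but it follows a genuinely different route from the paper's own proof of this proposition. The paper constructs $\Delta$ explicitly as the vector of signed maximal minors of $X$, $\delta_k = (-1)^k|\hat{X}_k|$, and obtains \eqref{eqmsd} by wedging the equation $\sum_{j\neq i} m_j s_{ij}\, x_j = 0$ with $v_{kl} = x_1 \wedge \dots \wedge \widehat{x}_k \wedge \dots \wedge \widehat{x}_l \wedge \dots \wedge x_n$, which annihilates all terms except $j=k$ and $j=l$ and leaves a two-term determinantal identity. Your argument instead recognizes the mass-weighted shape vectors $D_j = (m_1 s_{1j},\dots,m_n s_{nj})$ as elements of $\ker X$ (the coordinate rows give \eqref{eqccshape}, the row of ones gives $\sum_i m_i s_{ij}=0$ via the diagonal convention in \eqref{sdef} and the symmetry $s_{ij}=s_{ji}$), then uses $\dim\ker X = 1$ to get the rank-one factorization $m_i s_{ij} = c_j\delta_i$ and symmetry to conclude. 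This is in fact exactly the mechanism the paper deploys one proposition later, in the proof of Proposition \ref{dizeq}; so your route effectively unifies Propositions \ref{propmsd} and \ref{dizeq} in one stroke, avoids the sign bookkeeping of the exterior-algebra computation, and works for an arbitrary kernel generator rather than the specific cofactor vector. What the paper's approach buys in exchange is the explicit determinantal formula for $\Delta$, i.e., the classical identification of the $\delta_i$ with oriented volumes, which is of independent interest. Two small merits of your write-up worth keeping: the row-of-ones argument $\sum_i \delta_i = 0$ gives a clean, complete proof that $\Delta$ has at least two nonzero entries (a point the paper's proof does not explicitly address), and you correctly flag that the proportionality must, and does, extend to the diagonal entries $m_j s_{jj}$.
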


\begin{proof}
Let $v_{kl} = x_1 \wedge \dots \wedge \widehat{x}_k \wedge \dots \wedge \widehat{x}_l \wedge \dots \wedge x_n$ be an $(n-2)$-dimensional exterior product, omitting the terms $x_k$ and $x_l$. Consider the equation:
\[
\sum_{j \neq i} m_j s_{ij} x_j \wedge v_{kl} = 0.
\]

Let $\mathcal{B}=\{e_1, \dots, e_{n-2}\}$ be a basis of $\mathbb{R}^{n-2}$ and define the $(n-1)\times(n-1)$ matrix $\hat{X}_k$ obtained from $X$ by removing its $k$-th column. Denote its determinant by $|\hat{X}_k|$. Note that:
\[
x_j \wedge v_{kl}=\begin{cases}
      0, & \text{if } j\neq k,l,\\
      (-1)^{k-1}|\hat{X}_k| e_1\wedge \dots \wedge e_{n-2}, & \text{if } j=k,\\
      (-1)^{l}|\hat{X}_l| e_1\wedge \dots \wedge e_{n-2}, & \text{if } j=l.
\end{cases}
\]

Choosing $\Delta=(\delta_1, \dots, \delta_n) = (|\hat{X}_1|, \dots, (-1)^k|\hat{X}_k|, \dots, (-1)^n|\hat{X}_{n}|)$ yields the result.
\end{proof}

\begin{proposition}\label{dizeq}
Let $x$ be a central configuration with non-zero masses and $\delta(x) = n-2$, and let $s_{ij}$ be as in \eqref{sdef}. Then there exists $c \neq 0$ such that:
\begin{equation}
    m_im_js_{ij} = \kappa \delta_i \delta_j.
\end{equation}
\end{proposition}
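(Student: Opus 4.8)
The plan is to leverage Proposition~\ref{propmsd} directly, since the relation \eqref{eqmsd} already encodes a strong proportionality between the quantities $m_k s_{ik}$ and the kernel entries $\delta_k$. Rewriting \eqref{eqmsd} as
\[
\frac{m_k s_{ik}}{\delta_k} = \frac{m_l s_{il}}{\delta_l},
\]
valid whenever the $\delta$'s are nonzero, shows that for a \emph{fixed} index $i$ the ratio $m_k s_{ik}/\delta_k$ is independent of the column index $k$. I would first name this common value: there exists a scalar $c_i$ (depending a priori on $i$) such that $m_k s_{ik} = c_i \,\delta_k$ for all $k$. Multiplying through by $\delta_i$ gives $m_k s_{ik}\delta_i = c_i \,\delta_k \delta_i$.

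The key step is then to identify $c_i \delta_i$ as a single constant $\kappa$ independent of $i$. To see this, I would exploit the symmetry $s_{ik} = s_{ki}$ (immediate from the definition \eqref{sdef}, since $u_{ik}=u_{ki}$) together with the symmetry of the target expression $m_i m_k s_{ik}$. Concretely, multiply $m_k s_{ik} = c_i \delta_k$ by $m_i$ to get $m_i m_k s_{ik} = c_i m_i \delta_k$. By symmetry in $i \leftrightarrow k$ we also have $m_i m_k s_{ik} = c_k m_k \delta_i$. Equating the two expressions yields $c_i m_i \delta_k = c_k m_k \delta_i$, i.e. the quantity $c_i m_i/\delta_i$ is itself independent of $i$; call it $\kappa$. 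Then $c_i m_i = \kappa \delta_i$, and substituting back gives $m_i m_k s_{ik} = c_i m_i \delta_k = \kappa \delta_i \delta_k$, which is exactly the claimed identity.

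The main obstacle, and the point requiring genuine care, is the handling of the vanishing entries of $\Delta$. Proposition~\ref{propmsd} only guarantees $\Delta \neq 0$ with at least two nonzero entries, so the division steps above are not a priori justified for every index. The plan is to work over the support of $\Delta$ first, establishing $\kappa$ from two indices where $\delta_i, \delta_k \neq 0$, and then to verify \eqref{eqmsd} directly forces the formula $m_i m_j s_{ij} = \kappa \delta_i \delta_j$ to hold even when one of $\delta_i,\delta_j$ vanishes. Indeed, if $\delta_j = 0$ then \eqref{eqmsd} with $l=j$ and any $k$ in the support gives $m_j s_{ij}\delta_k = m_k s_{ik}\cdot 0 = 0$, forcing $s_{ij}=0$ (as $m_j,\delta_k \neq 0$), so both sides of the desired identity vanish. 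I would also note that $\kappa \neq 0$: were $\kappa = 0$, the identity would force $s_{ij}=0$ for all pairs with $\delta_i\delta_j \neq 0$, which contradicts the configuration being genuinely $(n-2)$-dimensional and central with nonzero masses. Assembling these cases completes the argument.
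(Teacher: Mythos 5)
Your proposal is correct and follows essentially the same route as the paper: establish the proportionality $m_k s_{ik} = c_i\,\delta_k$ for fixed $i$, use the symmetry $s_{ik}=s_{ki}$ to merge the constants $c_i$ into a single $\kappa$, and rule out $\kappa=0$ via the regular-simplex contradiction with $\delta(x)=n-2$. The only differences are cosmetic: the paper obtains the proportionality directly from $\dim\ker(X)=1$ (the vectors $(m_1s_{1j},\dots,m_ns_{nj})$ lie in $\ker X$) and packages the symmetry step as a rank-one $2\times n$ matrix, which implicitly absorbs the vanishing-$\delta_j$ cases that you, commendably, treat by explicit case analysis, whereas you quote Proposition \ref{propmsd}, whose relation \eqref{eqmsd} encodes the same proportionality fact.
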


\begin{proof}
The proof relies on the structure of the kernel of the configuration matrix $X$. First, observe that for each fixed $j$, the vector $(m_1 s_{1j}, \dots, m_n s_{nj})$ belongs to the kernel of $X$.

Since $\dim(\ker(X))=1$ and $\Delta=(\delta_1, \dots, \delta_n)$ generates the kernel of $X$, there must exist constants $k_j$ such that:
\[
m_i s_{ij} = k_j \delta_i.
\]
Invoking the symmetry $s_{ij} = s_{ji}$, it follows that the  matrix

$$\begin{pmatrix}
    k_1 & \dots & k_n \\
    \frac{\delta_1}{m_1} & \dots & \frac{\delta_n}{m_n}
\end{pmatrix}$$

has rank $1$, which leads directly to the formula:
\[
m_i m_j s_{ij} = \kappa \delta_i \delta_j.
\]
Finally, if $c=0$, all quantities $s_{ij}$ are zero, corresponding to a regular simplex. This contradicts the hypothesis that the configuration is Dziobek.
\end{proof}

As an immediate consequence, we have:

\begin{corollary}\label{corverdiz}
Let $x$ be a Dziobek configuration. Then
\begin{equation}\label{verdiz}
    s_{ij} s_{kl} = s_{il}s_{kj},
\end{equation}
where $i, j, k, l \in \{1, \dots, n\}$.
\end{corollary}

In the Section \ref{sec:ver} we will connect these equations to the classical Veronese map from algebraic geometry.  We associate a Dziobek configuration $x$ with a point $s=(s_{ij}) \in \mathbb{R}^{\binom{n+2}{2}}$, where $s_{ij}$ is given by \eqref{sdef}

\begin{corollary}
Let $s=(s_{ij})$ be the point associated with a Dziobek configuration $x$. Let $V_n$ be the determinantal variety defined by the vanishing of the $2 \times 2$ minors of the symmetric matrix
$$S=\left(\begin{array}{cccc}
s_{11}&s_{12}&\cdots&s_{1n}\\
s_{12}&s_{22}&\cdots&s_{2n}\\
\vdots&\vdots&\cdots&\vdots\\
s_{1n}&s_{2n}&\cdots&s_{nn}\\
\end{array}\right).$$
Then $s=(s_{ij}) \in V_n$.
\end{corollary}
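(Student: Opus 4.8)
The plan is to recognize that the $2\times 2$ minors of the symmetric matrix $S$ are exactly the Plücker-type quadrics already appearing in Corollary~\ref{corverdiz}, so that membership in $V_n$ reduces to relations we have in hand. Concretely, the minor obtained from rows $i,k$ and columns $j,l$ of $S$ is
\[
\det\begin{pmatrix} s_{ij} & s_{il} \\ s_{kj} & s_{kl}\end{pmatrix} = s_{ij}s_{kl} - s_{il}s_{kj},
\]
and Corollary~\ref{corverdiz} asserts precisely that $s_{ij}s_{kl} = s_{il}s_{kj}$. Thus, for any choice of two rows and two columns, the corresponding minor vanishes, which is exactly the condition defining $V_n$.

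The one point that requires care is that $V_n$ is cut out by \emph{all} $2\times 2$ minors, including those involving the diagonal entries $s_{ii}$, whereas Proposition~\ref{dizeq} provides the factorization $m_im_js_{ij} = \kappa\,\delta_i\delta_j$ for the off-diagonal entries. To handle the diagonal uniformly, I would first upgrade this factorization. Using the defining relation $m_i s_{ii} = -\sum_{j\neq i} m_j s_{ij}$ from \eqref{sdef} together with $m_j s_{ij} = \kappa\,\delta_i\delta_j/m_i$, one computes
\[
m_i^2 s_{ii} = -\kappa\,\delta_i \sum_{j\neq i}\delta_j.
\]
Since $\Delta=(\delta_1,\dots,\delta_n)$ lies in the kernel of the configuration matrix $X$ whose first row is $(1,\dots,1)$, we have $\sum_{j}\delta_j = 0$, hence $\sum_{j\neq i}\delta_j = -\delta_i$ and therefore $m_i^2 s_{ii} = \kappa\,\delta_i^2$. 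This shows that the factorization $m_im_js_{ij}=\kappa\,\delta_i\delta_j$ in fact holds for \emph{all} indices $i,j$, diagonal included.

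With this in hand the conclusion is immediate: setting $v=(\delta_1/m_1,\dots,\delta_n/m_n)$, the matrix satisfies $S = \kappa\, v v^{\mathsf{T}}$, a symmetric matrix of rank at most one. Every $2\times 2$ minor of a rank-one matrix vanishes, so all the defining equations of $V_n$ are satisfied and $s\in V_n$. The only genuine obstacle in the argument is the diagonal case just discussed; once the vanishing of $\sum_j \delta_j$ is invoked to extend the rank-one structure across the whole of $S$, membership in the determinantal variety follows with no further computation.
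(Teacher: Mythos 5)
Your proof is correct and is in substance the same as the paper's: the paper disposes of this corollary in one line, observing that the $2\times 2$ minors of $S$ are exactly the relations $s_{ij}s_{kl}=s_{il}s_{kj}$ of Corollary~\ref{corverdiz}, which in turn come from the rank-one factorization $m_im_js_{ij}=\kappa\,\delta_i\delta_j$ of Proposition~\ref{dizeq}. The only place you diverge is the separate argument for the diagonal entries, and that step, while correct, is not actually needed: Proposition~\ref{dizeq} and Corollary~\ref{corverdiz} are stated (and proved) for \emph{all} indices, diagonal included, because the kernel vectors $(m_1s_{1j},\dots,m_ns_{nj})$ used in the proof of Proposition~\ref{dizeq} already contain the diagonal term $m_js_{jj}$; indeed the definition $m_js_{jj}=-\sum_{i\neq j}m_is_{ij}$ in \eqref{sdef} is precisely what makes $\sum_i m_is_{ij}=0$ (the all-ones row of $X$) and, via \eqref{eqccshape}, $\sum_i m_is_{ij}x_i=0$ hold, so the relation $m_is_{ij}=k_j\delta_i$ covers $i=j$ as well. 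That said, your computation $m_i^2s_{ii}=\kappa\,\delta_i^2$ from $\sum_j\delta_j=0$ is a valid independent verification of consistency, and your packaging $S=\kappa\,vv^{\mathsf{T}}$ with $v=(\delta_1/m_1,\dots,\delta_n/m_n)$ states the underlying rank-one structure more transparently than the paper does.
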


\begin{proof}
It follows directly from equations \eqref{verdiz}.
\end{proof}



\section{The Veronese Embedding and Dimension Theory}\label{sec:ver}

In this section, we recall properties of the Veronese map and the dimension theorems necessary for our proof. We assume familiarity with basic projective algebraic geometry, particularly the material covered in \cite[Lecture 1]{harris1992algebraic} and \cite[Chapter 1]{shafarevich1994basic}. For a concise reference gathering all definitions and properties of projective varieties used in this work, we refer the reader to \cite[Section 4]{moeckel2001generic}.

\subsection{The Veronese Map}
The Veronese map  of degree $d$ in $\mathbb{P}^n$ is the regular map $v_{d}: \mathbb{P}^n \rightarrow \mathbb{P}^{\binom{n+d}{d}-1}$ is defined by  $v_{d}([x_0,...,x_n])=[...:x^{I}:....]$, where $x^{I}$ ranges over all monomials of degree $d$ in $x_1,...,x_n$. For example, the Veronese variety of degree $2$ in $\mathbb{P}^2$ is given by $v_{2,2}: \mathbb{P}^2 \rightarrow \mathbb{P}^5$ defined by
$$v_{2,2}([x_0:x_1:x_2)=[x_{0}^2:x_{1}^2:x_{2}^2:x_1x_2:x_1x_3:x_2x_3].$$

The Veronese variety of degree $d$ in $\mathbb{P}^n$, denoted by $V_{n,d}$, is the closure of the image of the Veronese map.
It is a well-known that $V_{n,d}$ is isomorphic to $\mathbb{P}^{n}$ for all $d$. In particular, $\text{dim}V_{n,d}=n.$ and $V_{n,d}$ is a irreducible variety.

Following example $2.6$ of \cite{harris1992algebraic}, denote by $\{z_{ij}\}_{0 \leq i \leq j \leq n}$ the coordinates of $\mathbb{P}^{\binom{n+2}{2}-1}$, where we identify $z_{ij} = z_{ji}$.  The ideal $I(V_{n,2})$ is generated by the $2 \times 2$ minors of the symmetric matrix
\[
M =
\begin{pmatrix}
z_{00} & z_{01} & \cdots & z_{0n} \\
z_{01} & z_{11} & \cdots & z_{1n} \\
\vdots & \vdots & \ddots & \vdots \\
z_{0n} & z_{1n} & \cdots & z_{nn}
\end{pmatrix}.
\]
Explicitly, these generators are the quadratic binomials:
\[
z_{ij}z_{kl} - z_{ik}z_{jl} = 0, \quad \text{for } 0 \le i,j,k,l \le n.
\]

These relations are the same as satisfied by $s_{ij}$ exhibit in Corollary \ref{corverdiz}. In general, for any $n$, observe that the Veronese variety of degree $2$ in $\mathbb{P}^{n-1}$  is isomorphic to the determinantal variety $V(s)_{n}$ defined by the zero locus of equations \eqref{verdiz} containing the vectors $s_{ij}$ coming from Dziobek configurations and, in particular, has dimension $n-1$. This observation will be crucial for the approach presented in section \ref{sec:fin}.


\subsection{Fiber Dimension}
Our main result relies on establishing that the set of Dziobek configurations associated to generic masses consists of isolated points. To do this, we employ the classical theorem on the dimension of fibers of morphisms.

\begin{theorem}[{Dimension of Fibers Theorem, \cite[Thm. 1.25]{shafarevich1994basic}} ] \label{fiberdim}
Let $f : X \to Y$ be a regular map between irreducible algebraic varieties. Suppose that $f$ is surjective: $f(X) = Y$, and that $\dim X = n$, $\dim Y = m$. Then $m \le n$, and
\begin{enumerate}
    \item[(i)] $\dim F \ge n - m$ for any $y \in Y$ and for any component $F$ of the fibre $f^{-1}(y)$;
    \item[(ii)] there exists a nonempty open subset $U \subset Y$ such that $\dim f^{-1}(y) = n - m$ for $y \in U$.
\end{enumerate}
\end{theorem}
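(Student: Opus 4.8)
The plan is to address the three assertions separately: the inequality $m \le n$ is immediate, since the image of an irreducible variety under a regular map cannot have larger dimension than the source, and here that image is all of $Y$. For the two dimension estimates I would treat the lower bound (i) by a cutting argument and the generic exact value (ii) by reduction to the function fields of $X$ and $Y$.

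For (i), I would first pass to the affine setting, replacing $Y$ by an affine open neighborhood of $y$ and $X$ by its preimage; this changes neither the local dimension of the fibre at $y$ nor the dimension of any component meeting it. Since $\dim Y = m$, the local ring $\mathcal{O}_{Y,y}$ has Krull dimension $m$, so I can choose $m$ regular functions $g_1, \dots, g_m$ on this neighborhood forming a system of parameters, that is, cutting out $y$ as an isolated point of $V(g_1,\dots,g_m)$. After shrinking the neighborhood so that $y$ is the only point of this zero locus, the identity $f^{-1}(y) = V(f^{*}g_1, \dots, f^{*}g_m)$ holds on the corresponding open subset of $X$. Every component of the common zero locus of $m$ functions on the $n$-dimensional variety $X$ has codimension at most $m$, by Krull's Hauptidealsatz applied $m$ times, so each component $F$ of $f^{-1}(y)$ satisfies $\dim F \ge n - m$.

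For (ii), I would exploit that surjectivity makes $f$ dominant, so $f^{*}$ realizes $k(Y)$ as a subfield of $k(X)$ of transcendence degree $n - m$. Choosing a transcendence basis $u_1, \dots, u_{n-m} \in k(X)$ yields a dominant rational map $\phi = (f, u_1, \dots, u_{n-m})$ from $X$ to $Y \times \mathbb{A}^{n-m}$ that is generically finite, since $k(X)$ is algebraic over $k(Y)(u_1,\dots,u_{n-m}) = k(Y \times \mathbb{A}^{n-m})$. A dominant generically finite map between irreducible varieties of equal dimension restricts to a finite morphism over some nonempty open subset of $Y \times \mathbb{A}^{n-m}$. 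Writing $f = \pi \circ \phi$ with $\pi$ the projection to $Y$, I would take $U$ to be a nonempty open subset of $Y$ lying under this locus of good behavior; over such $U$ the fibre $f^{-1}(y)$ maps with finite fibres onto $\{y\} \times \mathbb{A}^{n-m}$, forcing $\dim f^{-1}(y) \le n - m$. Combined with (i), this pins the value to exactly $n - m$ on $U$.

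The main obstacle I anticipate is the passage from the rational map $\phi$ to honest finite fibres over an open base. Since $\phi$ is only defined on a dense open subset of $X$, I must control both its locus of indeterminacy and the locus where its fibres fail to be finite, and then verify that the projection of these bad loci to $Y$ is contained in a proper closed subset. This is where a spreading-out argument together with Chevalley's constructibility theorem are needed, and where the upper semicontinuity of the fibre dimension is used to guarantee that the set on which $\dim f^{-1}(y) = n - m$ is genuinely open rather than merely constructible.
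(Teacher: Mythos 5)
The paper itself gives no proof of this statement: it is quoted directly from Shafarevich with a citation, so your attempt can only be measured against the standard textbook argument. Your treatment of $m \le n$ and of part (i) is correct and is essentially that standard argument: cut out $y$ set-theoretically by a system of parameters $g_1,\dots,g_m$, pull back, and apply Krull's height theorem to the zero locus of $m$ functions on the irreducible $n$-dimensional variety $f^{-1}(U)$. No complaints there.

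Part (ii), however, contains a genuine gap, and it sits exactly where you anticipated trouble. Having reduced to the generically finite rational map $\phi=(f,u_1,\dots,u_{n-m})$ from $X$ to $Y\times\mathbb{A}^{n-m}$, you propose to handle the locus of indeterminacy and the locus of non-finite fibres by verifying that ``the projection of these bad loci to $Y$ is contained in a proper closed subset.'' That verification is impossible in general: the bad locus is a closed subset of $X$ of dimension at most $n-1$, and whenever $n-1\ge m$ its image under $f$ can be all of $Y$. Concretely, take $X=\mathbb{A}^2$, $Y=\mathbb{A}^1$, $f(x,y)=x$, and the transcendence basis $u=1/y$ of $k(X)$ over $k(Y)$: the indeterminacy locus $\{y=0\}$ of $\phi$ maps onto $Y$, so no nonempty open $U\subset Y$ avoids its image, and your choice of $U$ cannot be made. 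There are two standard repairs. One is Noetherian induction on $\dim X$: each irreducible component of the bad locus either fails to dominate $Y$ (shrink $U$ to avoid its image) or dominates $Y$, in which case, having dimension at most $n-1$, its generic fibres have dimension at most $n-1-m<n-m$ by the inductive hypothesis. The cleaner repair removes the bad locus altogether by doing the spreading-out algebraically: with $X$ affine, Noether normalization of $k[X]\otimes_{k[Y]}k(Y)$ over $k(Y)$ produces $t_1,\dots,t_{n-m}\in k[X]$ and a single $h\in k[Y]$ such that $k[X]_{f^*h}$ is a finite module over $k[Y]_h[t_1,\dots,t_{n-m}]$; then $f^{-1}(Y_h)\to Y_h\times\mathbb{A}^{n-m}$ is an honest finite morphism on the \emph{entire} preimage, giving $\dim f^{-1}(y)\le n-m$ for every $y\in Y_h$ (the general quasi-projective case follows by covering $X$ with finitely many affine opens and intersecting the resulting open subsets of $Y$). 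Finally, your closing appeal to upper semicontinuity of fibre dimension is both unnecessary — (ii) asks only for one nonempty open $U$, not for openness of the locus of equality — and dangerously circular, since Chevalley's semicontinuity theorem is itself normally deduced from the theorem you are proving.
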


In Section \ref{sec:fin}, we introduce a regular map to $V_n(s)$ and determine the fiber dimension at points $(s_{ij})$ associated with Dziobek configurations. Our goal is to identify a quasi-projective variety containing all Dziobek configurations. To do this, we discard the closed set of points where the fiber dimension exceeds that of a generic Dziobek configuration. We rely on the semicontinuity of fiber dimensions to achieve this result.

\begin{theorem}[Semicontinuity of the Fibers \cite{mumford2004red}]
\label{thm:semicontinuity}
Let $f \colon V \to W$ be a regular map between irreducible algebraic varieties. For all $x \in V$, define
\[
e(x) = \max \{ \dim(Z) \mid Z \text{ is a component of } f^{-1}(f(x)) \text{ with } x \in Z \}.
\]
Then $e(x)$ is upper semi-continuous. That is, for all integers $m \ge 0$, the set
\[
S_m(f) := \{ x \in V \mid e(x) \ge m \}
\]
is closed in $V$.
\end{theorem}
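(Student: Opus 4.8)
The plan is to prove the upper semicontinuity of the local fiber dimension by induction on $\dim V$, using the Dimension of Fibers Theorem (Theorem \ref{fiberdim}) as the engine. Since $e$ depends only on the fibers $f^{-1}(f(x))$ taken inside $V$, replacing $W$ by the irreducible closure $\overline{f(V)}$ alters neither $e$ nor the sets $S_m(f)$, so I may assume $f$ is dominant. Set $n = \dim V$, $m = \dim W$, and $r = n - m$. By Chevalley's constructibility theorem the image contains a dense open $W_0 \subseteq W$, and the restriction $f^{-1}(W_0) \to W_0$ is a surjection of irreducible varieties to which Theorem \ref{fiberdim} applies; after shrinking I obtain a nonempty open $U \subseteq W_0$ such that every fiber $f^{-1}(y)$ with $y \in U$ is pure of dimension $r$.

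The first observation is that for $x \in f^{-1}(U)$ one has $e(x) = r$, while part (i) of Theorem \ref{fiberdim} gives $e(x) \ge r$ for \emph{every} $x \in V$. Consequently $S_m(f) = V$ for all $m \le r$, which is trivially closed, and it remains to treat $m > r$. For such $m$, if $e(x) \ge m > r$ then $f(x) \notin U$, since over $U$ every fiber has dimension exactly $r$; hence
\[
S_m(f) \subseteq B := f^{-1}(W \setminus U).
\]
Here $B$ is a proper closed subset of the irreducible variety $V$, so each of its finitely many irreducible components $B_1, \dots, B_k$ satisfies $\dim B_i < n$.

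The heart of the argument is to express $e$ on $B$ in terms of the restricted maps $g_i := f|_{B_i}\colon B_i \to \overline{f(B_i)}$. Fix $x$ with $e(x) > r$ and let $Z$ be a component of $f^{-1}(f(x))$ through $x$ with $\dim Z = e(x)$. For every $z \in Z$ the local fiber dimension of $f$ at $z$ is at least $\dim Z > r$, so by the previous paragraph $z \notin f^{-1}(U)$; thus $Z \subseteq B$, and since $Z$ is irreducible it lies in some $B_i$. As $Z$ is a maximal irreducible subset of $f^{-1}(f(x))$, it is a fortiori a component of the smaller set $g_i^{-1}(f(x)) = f^{-1}(f(x)) \cap B_i$ passing through $x$. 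Running this in reverse (every component of $g_i^{-1}(f(x))$ sits inside a component of $f^{-1}(f(x))$), one obtains for $m > r$ the identity
\[
S_m(f) = \bigcup_{i=1}^{k} S_m(g_i), \qquad S_m(g_i) = \{\, x \in B_i : e_{g_i}(x) \ge m \,\},
\]
where $e_{g_i}$ is the local fiber dimension for $g_i$. Each $g_i$ is a regular map between the irreducible varieties $B_i$ and $\overline{f(B_i)}$ with $\dim B_i < n$, so the inductive hypothesis makes every $S_m(g_i)$ closed in $B_i$, hence closed in $V$ because $B_i$ is closed in $V$. A finite union of closed sets being closed, $S_m(f)$ is closed, completing the induction; the base case $\dim V = 0$ is immediate.

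I expect the delicate step to be this containment argument for the component $Z$: one must verify that the \emph{entire} high-dimensional fiber component through $x$, and not merely the point $x$ itself, is swallowed by the bad locus $B$, so that it is genuinely detected by one of the restricted maps $g_i$. This is exactly where the purity of the generic fibers over $U$ is used, and it is what permits the local fiber dimension of $f$ to be recovered as the maximum of those of the $g_i$. A secondary technical point is the appeal to Chevalley's constructibility, both to guarantee that $U$ may be taken inside the image and to ensure that each target $\overline{f(B_i)}$ is a well-behaved irreducible variety, keeping the inductive hypothesis applicable.
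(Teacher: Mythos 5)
The paper does not actually prove this statement: it is quoted from Mumford's Red Book \cite{mumford2004red}, and the only related argument the paper supplies is Proposition \ref{prop:reducible_semicontinuity}, the extension to reducible domains. So there is no internal proof to compare yours against; what you have written is essentially the standard proof from the cited literature (induction on $\dim V$ driven by the fiber dimension theorem), and its structure is sound. Notably, your key identity $S_m(f)=\bigcup_i S_m(g_i)$, proved by observing that a maximal irreducible subset of the full fiber remains maximal in the restricted fiber $f^{-1}(f(x))\cap B_i$, is exactly the same device the paper uses to prove Proposition \ref{prop:reducible_semicontinuity}. Two remarks. First, your ``delicate step'' is more delicate than it needs to be: a component $Z$ through $x$ lies inside the single fiber $f^{-1}(f(x))$, so once $e(x)>r$ forces $f(x)\notin U$, you get $Z\subseteq f^{-1}(W\setminus U)=B$ outright, with no need to estimate the local fiber dimension at each point $z\in Z$. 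Second, there is one technical point you should close explicitly: your claim that $e(x)\ge r$ for \emph{every} $x\in V$, which is what makes $S_m(f)=V$ for all $m\le r$, applies part (i) of Theorem \ref{fiberdim} to the dominant map $f\colon V\to W$, but that theorem as quoted assumes $f$ surjective, and your Chevalley reduction only justifies the conclusion for points of $f^{-1}(W_0)$. You need part (i) in its dominant form (all components of any nonempty fiber of a dominant map of irreducible varieties have dimension at least $n-m$), which is true and follows from the same Krull-height argument used to prove the surjective case; without it, for $m\le r$ you would only know that $S_m(f)$ contains the dense open set $f^{-1}(U)$, which by itself does not give closedness. One sentence invoking the dominant version repairs this, after which your induction goes through.
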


Since the domain $V$ in our application implies a decomposition into irreducible components, we need to extend the previous result to reducible varieties. The following lemma ensures that we can apply the semicontinuity theorem component-wise.

\begin{proposition}
\label{prop:reducible_semicontinuity}
Let $f \colon V \to W$ be a morphism of algebraic varieties with $W$ irreducible. Suppose that
\[
V = V_1 \cup \cdots \cup V_k
\]
is the decomposition of $V$ into irreducible components, and let $f_i := f|_{V_i}$. Then
\[
S_m(f) = \bigcup_{i=1}^k S_m(f_i).
\]

In particular, $S_m(f)$ is closed.
\end{proposition}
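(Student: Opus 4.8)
The plan is to reduce the reducible case to the irreducible Theorem~\ref{thm:semicontinuity} by analyzing how the irreducible components of a single fiber $f^{-1}(w)$ are distributed among the components $V_i$ of the domain. The set-theoretic identity $S_m(f) = \bigcup_{i=1}^k S_m(f_i)$ is the heart of the statement: once it is established, closedness is immediate, since each $S_m(f_i)$ will be closed in $V_i$ (hence in $V$, as $V_i$ is closed in $V$) by the irreducible semicontinuity theorem, and a finite union of closed sets is closed.

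First I would fix $x \in V$, set $w = f(x)$, and record the fiber decomposition $f^{-1}(w) = \bigcup_{i=1}^k f_i^{-1}(w)$, where each $f_i^{-1}(w) = V_i \cap f^{-1}(w)$ is closed in $V$. The key lemma is the following: if $Z$ is an irreducible component of $f^{-1}(w)$ containing $x$, then $Z$ is contained in some $V_i$ and is in fact an irreducible component of $f_i^{-1}(w)$. Indeed, $Z$ is irreducible and contained in the finite union $\bigcup_i f_i^{-1}(w)$ of closed sets, so $Z \subseteq f_i^{-1}(w) \subseteq V_i$ for some $i$; the maximality of $Z$ among irreducible closed subsets of $f^{-1}(w)$ then forces maximality inside the smaller closed set $f_i^{-1}(w)$, because any strictly larger irreducible closed subset of $f_i^{-1}(w)$ would also lie in $f^{-1}(w)$ and contradict that maximality. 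Conversely, every irreducible component of $f_i^{-1}(w)$ through $x$ is contained in some component of $f^{-1}(w)$ through $x$ of at least the same dimension.

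These two observations yield the pointwise formula $e(x) = \max\{\, e_i(x) : x \in V_i \,\}$, where $e_i$ is the analogous invariant for $f_i$. For the inequality $e(x) \le \max_i e_i(x)$, I would take a maximal-dimensional component of $f^{-1}(w)$ through $x$; by the lemma it lives in some $V_i$, remains a component of $f_i^{-1}(w)$, and contains $x$, so $e_i(x) \ge e(x)$ with $x \in V_i$. For $e(x) \ge e_i(x)$ (whenever $x \in V_i$), I would take any component of $f_i^{-1}(w)$ through $x$ and enlarge it to a component of $f^{-1}(w)$ through $x$ of at least the same dimension. From the formula, $e(x) \ge m$ holds if and only if $e_i(x) \ge m$ for some $i$ with $x \in V_i$, which is precisely the assertion $S_m(f) = \bigcup_{i=1}^k S_m(f_i)$.

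Finally I would invoke Theorem~\ref{thm:semicontinuity} on each restriction $f_i \colon V_i \to W$: both $V_i$ and $W$ are irreducible and $f_i$ is regular, so $S_m(f_i)$ is closed in $V_i$, hence closed in $V$; the finite union $\bigcup_{i=1}^k S_m(f_i) = S_m(f)$ is therefore closed. The main obstacle is the bookkeeping in the key lemma, namely ensuring that the maximality defining ``irreducible component'' transfers correctly between the fiber $f^{-1}(w)$ and the smaller fibers $f_i^{-1}(w)$; everything else is a formal manipulation of the definition of $e$.
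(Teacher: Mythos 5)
Your proposal is correct and follows essentially the same route as the paper: both prove the identity $S_m(f)=\bigcup_i S_m(f_i)$ from the fiber decomposition $f^{-1}(w)=\bigcup_i f_i^{-1}(w)$, using irreducibility to place a component of the total fiber inside a single $V_i$ and the maximality-transfer argument to see it is a component of the restricted fiber, with the reverse inclusion coming from enlarging a component of $f_i^{-1}(w)$ to one of $f^{-1}(w)$ of at least the same dimension. The only cosmetic difference is that you package the two inclusions as the pointwise formula $e(x)=\max\{e_i(x): x\in V_i\}$ and spell out the closedness conclusion explicitly, whereas the paper argues the inclusions directly and leaves closedness implicit.
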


\begin{proof}
First, we assert that the total fiber $f^{-1}(y)$ is the union of the restricted fibers $f_i^{-1}(y)$ for any $y$ in $W$. This follows from the fact that $f_i$ is the restriction of $f$ to $V_i$, which implies that $f_i^{-1}(y)$ is equal to the intersection $f^{-1}(y) \cap V_i$.

To prove the inclusion $\bigcup_{i=1}^k S_m(f_i) \subseteq S_m(f)$, let $y$ be an element of $S_m(f_j)$ for some index $j$. By definition, there exists an irreducible component $Z$ of $f_j^{-1}(y)$ such that $\dim(Z) \ge m$. Since the restricted fiber $f_j^{-1}(y)$ is a subset of the total fiber $f^{-1}(y)$, the set $Z$ is also a closed subset of $f^{-1}(y)$. Consequently, $Z$ is contained in some irreducible component of the total fiber with dimension at least $m$. This implies that $e(y) \ge m$ and thus $y \in S_m(f)$.

Conversely, let $y \in S_m(f)$. There exists an irreducible component $Z$ of the fiber $f^{-1}(y)$ passing through $y$ with $\dim(Z) \ge m$. As $V$ is the union of the irreducible components $V_i$, the irreducible set $Z$ must be contained entirely within one specific component $V_j$. Therefore, $Z$ is a subset of the intersection $f^{-1}(y) \cap V_j$, which is exactly the restricted fiber $f_j^{-1}(y)$. Since $Z$ is a maximal irreducible subset of the total fiber, it is also a maximal irreducible subset of the restricted fiber. We conclude that $y \in S_m(f_j)$, which completes the proof.
\end{proof}

\section{Generic Finiteness of Dziobek Central Configurations}\label{sec:fin}

In this section, we present the proof of the generic finiteness of Dziobek central configurations for the potential $U_a$. Our approach relies on the geometric identification established in Section \ref{sec:dzio}, where we showed that the configuration space is isomorphic to the Veronese variety of degree 2.

We construct an projective variety $W$  that encodes the Dziobek conditions derived in Section \ref{sec:dzio}. By exploiting an isomorphism with the Veronese variety and applying the Fiber Dimension Theorem to the projection from $V$ onto the space of masses, we demonstrate that the set of Dziobek configurations for a generic choice of mass parameters is finite.





Define $N=\binom{n+1}{2}$ and consider the product projective space $\mathbb{P}^{N-1}\times \mathbb{P}^{n-1}\times \mathbb{P}^{n-1}$ denote its points by $([s_{ij}],[\delta_i],[m_j])$. Define the following algebraic variety:

$$W=\{([s_{ij}],[\delta_i],[m_i]) \in \mathbb{P}^{N-1}\times \mathbb{P}^{n-1}\times \mathbb{P}^{n-1}, \eqref{eqmsd} \text{ and } \eqref{verdiz}  \text{ hold }\}.$$

The equations $\eqref{eqmsd}$ and  $\eqref{verdiz}$  are tri-homogeneous and define a projective variety $W$ of the product projective space $\mathbb{P}^{N-1}\times \mathbb{P}^{n-1}\times \mathbb{P}^{n-1}$. Moreover, by proposition \ref{propmsd} and corollary \ref{corverdiz}, $W$ contains all points $P(x)=([s_{ij}],[\delta_i],[m_i])$ provided by Dziobek configurations. Let $W=W_{1}\cup ... \cup W_{k}$ the decomposition of $V$ in irreducible components.

Consider the determinantal variety $V_n(s)$. Since this variety is defined by the homogeneous quadrics $s_{ij}s_{kl}-s_{ik}s_{jl}$, we will see $V_n(s)$ as a projective variety of $\mathbb{P}^{N-1}$. Since $V_n(S)$ and the Veronese variety $V_{n-1,2}$ are isomorphic, we have that $V_n(s)$ is an irreducible projective variety of dimension $n-1$.

\begin{definition}
Let $x$ be a Dziobek central configuration and $s_{ij}$ as in \eqref{sdef}. We define the \emph{degeneracy index} of $x$ as the cardinality of the set $\{j \mid s_{jj}=0 \}$. We say that a Dziobek central configuration is \emph{non-degenerate} if its degeneracy index is zero.
\end{definition}

\begin{remark}
We observe that proposition \ref{propmsd} the degeneracy index of a Dziobek configuration is less than $n-2$.     
\end{remark}


 Consider the family of projections defined as the maps $\pi_{1}^{(l)}: W \to \mathbb{P}^{\binom{l+2}{2}}$ given by
\[
\pi_{1}^{(l)}\big( ([s_{ij}], [\delta_k], [m_k]) \big) =  [s_{12} : s_{13} : \dots : s_{(n-l)(n-l)}]
\]
For simplicity, we also denote $\pi_{1}^{(0)} = \pi_1$.

Let $x$ be a Dziobek configuration with degeneracy index $0 \leq l \leq n-2$ and let $P(x)=([s_{ij}], [\delta_k], [m_k])$ the point associated with $x$.  We can suppose without loss of generality that  
 $s_{ii} \neq 0$ for $i=1, \dots, n-l$, and $s_{ii} = 0$ for $i=n-l+1, \dots, n$. Proposition \ref{propmsd} implies that $n-l \geq 2$. Equations $\eqref{verdiz}$ $s_{ii}s_{jj}-s_{ij}^2$ implies that $s_{ij}=0$ if $i$ or $j$ belong to $\{n-l+1, \dots, n\}$, and $s_{ij} \neq 0$ if $i$ and $j$ belong to $\{1,....,n-l\}$. 
 
 We call the set $E=\{(i,j): s_{ij} \neq 0 \}$ by \emph{non-degenerate block} of $x$. The set  $D=\{(i,j): s_{ij} \neq 0 \}$ will be called \emph{degenerate block} of $x$.

 Denote by $s^{n-l}$ the point of $\mathbb{P}^{\binom{n-l+2}{2}}$ whose the coordinates are $s_{ij}$ with $(i,j)$ in the non-degenerate block. $s^{n-l}$ will be called the \emph{non-degenerate coordinates} of $x$. Note that $\pi^{l}(P(x))=s^{n-l}$.

\begin{lemma}
The image of the projection $\pi_{1}^{n-l}$ is isomorphic to the Veronese variety $V_{2,n-l-1}$
\end{lemma}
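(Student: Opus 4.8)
The plan is to exhibit $\pi_1^{n-l}$ explicitly as the degree-$2$ Veronese map on the non-degenerate indices, using the rank-one structure of $S$ coming from Proposition \ref{dizeq}, and then to read off the two inclusions at the level of coordinates.

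First I would establish that the image lies in the Veronese variety. By Corollary \ref{corverdiz} every point of $W$ satisfies the relations \eqref{verdiz}; restricting the indices $i,j,k,l$ to $\{1,\dots,n-l\}$ yields exactly the vanishing of all $2\times 2$ minors of the symmetric block $S^{n-l}=(s_{ij})_{1\le i,j\le n-l}$. By the identification recorded in Section \ref{sec:ver}, the zero locus of these minors in $\mathbb{P}^{\binom{n-l+1}{2}-1}$ is precisely the degree-$2$ Veronese variety of $\mathbb{P}^{n-l-1}$. Hence the image of $\pi_1^{n-l}$ is contained in this Veronese variety.

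For the reverse inclusion I would use Proposition \ref{dizeq}, which gives $m_im_js_{ij}=\kappa\,\delta_i\delta_j$ with $\kappa\neq 0$; writing $w_i=\delta_i/m_i$ (legitimate on the non-degenerate indices, where $s_{ii}=\kappa w_i^2\neq 0$ forces $w_i\neq 0$), the non-degenerate coordinates become $s_{ij}=\kappa\,w_iw_j$, so that $s^{n-l}$ is the image of $[w_1:\dots:w_{n-l}]$ under the Veronese embedding $v_2$. To get surjectivity I would run this backwards: given an arbitrary $[w_1:\dots:w_{n-l}]\in\mathbb{P}^{n-l-1}$, pick any nonzero $m_i$ and set $s_{ij}=w_iw_j$, $\delta_i=w_im_i$ on the non-degenerate block, and $s_{ij}=0$, $\delta_i=0$ on the degenerate indices. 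A short case check confirms that \eqref{eqmsd} and \eqref{verdiz} then hold, so this data is a genuine point of $W$ mapping to $[w_iw_j]$ under $\pi_1^{n-l}$. Letting $[w]$ range over all of $\mathbb{P}^{n-l-1}$—including tuples with some vanishing entries, which account for the boundary points of the Veronese—shows the image contains, hence equals, the whole Veronese variety $V_{2,n-l-1}$. Since $v_2$ is an isomorphism onto its image (Section \ref{sec:ver}), the projection is thereby identified with the degree-$2$ Veronese of $\mathbb{P}^{n-l-1}$.

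The step requiring the most care is this reverse inclusion: one must verify that the explicit triple $(s_{ij},\delta_i,m_i)$ built from an arbitrary $[w]$ really lies in $W$—in particular that \eqref{eqmsd} stays compatible once the degenerate entries are set to zero, and that the result is a valid point of the product of projective spaces (no coordinate block vanishing identically, which holds because any nonzero $w_{i}$ forces $s_{ii}\neq 0$ and $\delta_i\neq 0$). If one prefers to avoid the explicit construction, the same conclusion follows by a dimension-and-irreducibility argument: the containment places the image inside the irreducible variety $V_{2,n-l-1}$ of dimension $n-l-1$, while the factorization $s_{ij}=\kappa w_iw_j$ shows the image meets every standard affine chart, forcing the closed image to exhaust the Veronese variety.
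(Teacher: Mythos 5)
Your proof is correct and takes essentially the same route as the paper's: the inclusion $\mathrm{Im}(\pi_1^{(l)}) \subset V_{n-l-1}(s)$ is read off from the defining equations \eqref{verdiz} of $W$, and surjectivity is obtained by explicitly constructing, for each target point, a preimage in $W$ with arbitrarily chosen nonzero masses on the non-degenerate block and vanishing data on the degenerate one. The only difference is minor and in your favor: the paper builds $\delta$ from the first row of $s$ (essentially $\delta_i = m_i s_{1i}$), which degenerates when that row vanishes, whereas your parametrization $s_{ij}=w_iw_j$, $\delta_i=m_iw_i$ via the Veronese map covers every point of $V_{2,n-l-1}$, boundary points included.
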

\begin{proof}
By definition of $W$ we have $\text{Im}(\pi_{1}^{l}(W))\subset V_{n-l-1}(s)$. On the other hand, given a point
$[s_{12} : s_{13} : \dots : s_{(n-l)(n-l)}] \in V_{2,n-l-1}(s)$ choose an arbitrary point $[\overline{m}]=[m_1:\dots: m_{n-l}:0\dots:0]= \in \mathbb{P}^{n-1}$. Choosing a point $[\overline{\delta}]$ such that $[\overline{\delta}]=[\delta_1: \dots :\delta_{n-l}:0 \ldots:0]=[m_1s_{11}:\dots:s_{1(n-l)}:0 \dots:0]$ we have that the point $P=([s_{ij}], [\delta_k], [m_k]) \in W$ satisfies $\pi_{1}^{l}(P)=s^{n-l}$. Since $V_{n-l-1}(s)$ is isomorphic to $V_{2,n-l-1}$ the proof is complete.

\end{proof}

The subsequent lemma is a consequence of Dziobek equations \eqref{dizeq}. It will explain the Veronese geometry of Dziobek configuration in terms of degeneracy 

\begin{lemma} \label{lemgeoverdiz}
If $P(x)=([s_{ij}], [\delta_k], [m_k])$ is a point of $\mathbb{P}^{N-1}\times \mathbb{P}^{n-1}\times \mathbb{P}^{n-1}$  associated with a Dziobek configuration with degeneracy index $0 \leq l \leq n-2$, then $s^{n-l}$ is in the image of the Veronese map $v_{2,n-l-1}$. More precisely 

$$s^{n-l}=v_{2,n-l-1}\left( \left[ \sqrt{\kappa} \cdot \frac{\delta_1}{m_1}: \dots : \sqrt{\kappa} \cdot \frac{\delta_{n-l}}{m_{n-l}} \right] \right),$$
for which $\kappa$ as in Proposition \ref{corverdiz}
\end{lemma}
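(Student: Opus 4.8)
The plan is to unpack Proposition \ref{dizeq} on the non-degenerate block and read off the stated Veronese parametrization. Recall that Proposition \ref{dizeq} gives a constant $\kappa \neq 0$ with $m_i m_j s_{ij} = \kappa \delta_i \delta_j$ for all $i,j$, where this includes the diagonal entries $m_i^2 s_{ii} = \kappa \delta_i^2$. First I would restrict attention to the indices $i,j \in \{1,\dots,n-l\}$ that label the non-degenerate block $E$, where by the preceding discussion $s_{ij} \neq 0$, and in particular $s_{ii} \neq 0$ and $\delta_i \neq 0$ for these indices. Solving the relation for $s_{ij}$ yields
\begin{equation*}
s_{ij} = \kappa \, \frac{\delta_i}{m_i} \cdot \frac{\delta_j}{m_j}.
\end{equation*}
Setting $t_i := \sqrt{\kappa}\,\delta_i/m_i$ (a choice of square root of $\kappa$, legitimate over $\mathbb{C}$, or over $\mathbb{R}$ after observing the sign structure forces $\kappa > 0$ on the block since $s_{ii} = \kappa \delta_i^2/m_i^2$ and the diagonal entries share a common sign), this reads $s_{ij} = t_i t_j$ for all $1 \le i \le j \le n-l$.

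Next I would match this against the definition of the Veronese map $v_{2,n-l-1}\colon \mathbb{P}^{n-l-1} \to \mathbb{P}^{\binom{n-l+1}{2}-1}$, whose coordinate functions are exactly the degree-two monomials $t_i t_j$. Since the coordinates of $s^{n-l}$ are indexed by the pairs $(i,j)$ of the non-degenerate block, the identity $s_{ij} = t_i t_j$ says precisely that
\begin{equation*}
s^{n-l} = v_{2,n-l-1}\big([t_1 : \dots : t_{n-l}]\big) = v_{2,n-l-1}\left(\left[\sqrt{\kappa}\,\tfrac{\delta_1}{m_1} : \dots : \sqrt{\kappa}\,\tfrac{\delta_{n-l}}{m_{n-l}}\right]\right),
\end{equation*}
which is the claimed formula. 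One checks the point in $\mathbb{P}^{n-l-1}$ is well defined: not all $t_i$ vanish because each $\delta_i \neq 0$ on the block, and $\kappa \neq 0$.

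The main subtlety, and the step I would be most careful about, is the passage from $m_i m_j s_{ij} = \kappa \delta_i \delta_j$ to the rank-one factorization $s_{ij} = t_i t_j$ with a single common scalar $\sqrt{\kappa}$. This is really the statement that the symmetric matrix $(s_{ij})_{i,j \le n-l}$ has rank one, which is consistent with its image lying on the Veronese variety (the locus of rank-one symmetric matrices); the content of the lemma is that the rank-one factor is the explicit vector $(\delta_i/m_i)$ scaled by $\sqrt{\kappa}$. The only genuine issue is the choice of square root: over $\mathbb{R}$ one must confirm that $\kappa$ and the $s_{ii}$ have compatible signs so that $\sqrt{\kappa}$ can be taken real and the factorization is honest rather than merely projective, whereas over $\mathbb{C}$ any branch works and the projective equality is automatic. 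Everything else is a direct substitution once Proposition \ref{dizeq} is restricted to the non-degenerate indices.
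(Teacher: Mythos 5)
Your proof is correct and takes essentially the same route as the paper: it rests on Proposition \ref{dizeq}, which gives $m_i m_j s_{ij} = \kappa\,\delta_i\delta_j$ on the non-degenerate block, and then matches these products against the degree-two monomials defining $v_{2,n-l-1}$ — the paper simply runs the same computation in the opposite direction, expanding the Veronese image of $\bigl[\sqrt{\kappa}\,\delta_i/m_i\bigr]$ and identifying the result with $s^{n-l}$. Your concern about the real branch of $\sqrt{\kappa}$ is moot, since $\sqrt{\kappa}$ multiplies every homogeneous coordinate and hence $\bigl[\sqrt{\kappa}\,\delta_1/m_1 : \dots : \sqrt{\kappa}\,\delta_{n-l}/m_{n-l}\bigr] = \bigl[\delta_1/m_1 : \dots : \delta_{n-l}/m_{n-l}\bigr]$ in $\mathbb{P}^{n-l-1}$; note also that your parenthetical claim that the sign structure forces $\kappa>0$ is not justified, because $s_{ii}$ (defined by $m_i s_{ii} = -\sum_{j\neq i} m_j s_{ij}$) need not be positive — but this does not affect the projective identity, which is the statement being proved.
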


\begin{proof}
Note that

\begin{align*}
    v_{2,n-l-1}\left( \left[ \sqrt{\kappa} \cdot \frac{\delta_1}{m_1}: \dots : \sqrt{\kappa} \cdot \frac{\delta_{n-l}}{m_{n-l}} \right] \right)& =
    (k\frac{\delta_1^2}{m_1^2}: \kappa \frac{\delta_1}{m_1}\frac{\delta_2}{m_2}: \dots : \frac{\delta_{n-l}^2}{m_{n-l}})\\
    &=[s_{11}:s_{12}: \dots: s_{(n-l)(n-l)}]=s^{n-l}.
\end{align*}

\end{proof}

Next, we will compute the fiber of $\pi_1$ in a point $P(x)$ associated to a non-degenerate Dziobek configuration. 

\begin{proposition}\label{nondencase}
Let $x$ be a nondegenerate  Dziobek configuration and let $P(x) \in W_i$ be the point associated with a Dziobek configuration. Then the fiber $\pi_1^{-1}(P(x))$ is a unique point. In particular, $\pi_1^{-1}(P(x))$ have dimension $0$.
\end{proposition}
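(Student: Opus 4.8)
The plan is to exploit the rank-one structure of the symmetric matrix $S=(s_{ij})$ that non-degeneracy forces, and then to show that the Dziobek relations \eqref{eqmsd} leave no freedom in the remaining projective coordinates once the data of $P(x)$ are prescribed. First I would translate the hypothesis: a degeneracy index equal to $0$ means $s_{jj}\neq 0$ for every $j$, and then the Veronese relations \eqref{verdiz}, read in the form $s_{ii}s_{jj}=s_{ij}^{2}$, force $s_{ij}\neq 0$ for every pair. Equivalently, $S$ has rank exactly one, so by Lemma~\ref{lemgeoverdiz} we may write $s_{ij}=w_i w_j$ with each $w_i=\sqrt{\kappa}\,\delta_i/m_i$ nonzero; these are precisely the quantities frozen at $P(x)$.

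Next I would insert this rank-one form into \eqref{eqmsd}. Substituting $s_{ik}=w_i w_k$ and cancelling the common nonzero factor $w_i$ — a step legitimate exactly because non-degeneracy guarantees $w_i\neq 0$ — the whole family of equations collapses to the single proportionality condition
\[
m_k w_k\,\delta_l=m_l w_l\,\delta_k,\qquad 1\le k<l\le n,
\]
which says that the vector $(m_1 w_1,\dots,m_n w_n)$ is parallel to $(\delta_1,\dots,\delta_n)$. Since the masses are prescribed at $P(x)$ and every $w_i\neq 0$, the entries $m_i w_i$ are not all zero, so this relation determines $[\delta]\in\mathbb{P}^{n-1}$ uniquely as $[m_1 w_1:\dots:m_n w_n]$. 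Hence the only point of the fiber above $P(x)$ is $P(x)$ itself, which is therefore an isolated, hence $0$-dimensional, point. The symmetric computation (fixing $[s]$ and $[\delta]$ recovers $[m]$) records the same rigidity, and this is what later feeds the dimension estimate through Theorem~\ref{fiberdim}.

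The main obstacle I anticipate is not the algebra but the bookkeeping of the projective and reducible setting: I must rule out any extra positive-dimensional branch of the fiber arising from coordinate strata where some $s_{ij}$, $\delta_k$, or $m_k$ vanishes. This is exactly where the non-degeneracy hypothesis is indispensable — it is the condition that makes every $w_i$, and hence every coefficient appearing in \eqref{eqmsd}, nonzero, so the proportionality relation above is everywhere non-degenerate and cannot split off a lower-stratum component. I would close by observing that the fiber is reduced, since after fixing $[s]$ and $[m]$ the relations are linear in $[\delta]$, so no multiplicity or embedded component can appear; the genuinely positive-dimensional strata and the accompanying semicontinuity argument of Proposition~\ref{prop:reducible_semicontinuity} are reserved for the degenerate case $l>0$, treated separately.
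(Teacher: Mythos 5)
Your argument breaks down at the sentence ``Since the masses are prescribed at $P(x)$.'' They are not: the fiber in question is $\pi_1^{-1}(\pi_1(P(x)))$, and $\pi_1$ records only the coordinate $[s_{ij}]$, so a point of this fiber is \emph{any} triple $([s_{ij}],[\overline\delta\,],[\overline m])$ satisfying \eqref{eqmsd} and \eqref{verdiz}; both $[\overline\delta\,]$ and $[\overline m]$ are allowed to vary. Your computation, after cancelling $w_i$, yields $\overline m_k w_k\,\overline\delta_l=\overline m_l w_l\,\overline\delta_k$, which says only that $[\overline\delta\,]=[\overline m_1w_1:\dots:\overline m_nw_n]$ is determined \emph{in terms of} $[\overline m]$; it does not pin down $[\overline m]$ itself. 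Concretely, for an arbitrary $[\overline m]\in\mathbb{P}^{n-1}$ the point
\[
\bigl([w_iw_j],\,[\overline m_1w_1:\dots:\overline m_nw_n],\,[\overline m_1:\dots:\overline m_n]\bigr)
\]
satisfies \eqref{verdiz} (the $s$-coordinates are unchanged) and \eqref{eqmsd} (both sides equal $w_iw_kw_l\,\overline m_k\overline m_l$), hence lies in $\pi_1^{-1}(\pi_1(P(x)))$. What your computation establishes is that the fiber of the different map $([s],[\delta],[m])\mapsto([s],[m])$ is a single point; that map fixes the masses, $\pi_1$ does not. Note also that the obstacle you anticipate --- extra branches supported on coordinate strata where some $s_{ij}$, $\delta_k$ or $m_k$ vanishes --- is not where the difficulty lies: the points displayed above have all coordinates nonzero whenever every $\overline m_k\neq0$.

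The step you are missing is precisely the crux, and it is where the paper's own proof spends its effort: it takes two fiber points $([s_{ij}],[\delta],[m])$ and $([s_{ij}],[\tilde\delta\,],[\tilde m])$ with a priori \emph{different} masses, and tries to force $[\tilde\delta\,]=[\delta]$ by arguing that both $\delta$ and $\tilde\delta$ are proportional to the vector $(m_1s_{11},\dots,m_ns_{1n})$, which spans the one-dimensional kernel of the configuration matrix $X$; only after that does it recover $[\tilde m]=[m]$ from the equality of ratios $[\tilde\delta_i/\tilde m_i]=[z_i]=[\delta_i/m_i]$. Any repair of your argument must supply input of this kind, going beyond the equations \eqref{eqmsd} and \eqref{verdiz} at fixed $[s_{ij}]$, since, as the family above shows, those equations alone do not force uniqueness of the mass vector. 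Be aware that this is a delicate point even in the paper's version: for the second fiber point, relations \eqref{eqmsd} give rank one of the matrix whose first row is $(\tilde m_is_{1i})$, not $(m_is_{1i})$, so the rank-one claim for the paper's second matrix already presupposes control over $[\tilde m]$ of exactly the sort being sought; your proposal, as written, skips this issue entirely rather than resolving it.
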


\begin{proof}
Let $x$ be a Dziobek central configuration with masses $m_1, \dots, m_n$. $x$ is associeted to a point
\[
P(x) = ([s_{ij}], [\delta_i], [m_i]) \in W,
\]
where $s_{ij} = u_{ij}^{2a} - 1$ and   $\Delta=(\delta_1,...,\delta_n)$ is a generator of the kernel of the configuration matrix $X$.

First, consider the case in which $x$ is nondegenerate. We observe that $ \pi_1(P(x))=[s_{ij}]\in V_{n}(s)$. We claim that the fiber $\pi_1^{-1}([s_{ij}])$ is a unique point.


By Lemma \ref{lemgeoverdiz}

\[
[s_{ij}] = v_{2,n-1}\left( \left[ \sqrt{\kappa} \cdot \frac{\delta_i}{m_i} \right] \right),
\]
which shows that $[s_{ij}]$ lies on the Veronese variety $V_{2,n-1}$. Since the Veronese map is an isomorphism onto its image, there exists a unique point $[z_1 : \dots : z_n] \in \mathbb{P}^{n-1}$ such that
\[
v_2([z_1 : \dots : z_n]) = [s_{ij}].
\]

Thus, for any point $([s_{ij}], [\delta_j], [m_j]) \in \pi_1^{-1}([s_{ij}])$, we must have
\[
\left[ \frac{\delta_i}{m_i} \right] = [z_i],
\]
where the projective coordinates $[z_i] \in \mathbb{P}^{n-1}$ are uniquely determined.

Now suppose that both $([s_{ij}], [\delta_j], [m_j])$ and $([s_{ij}], [\tilde{\delta}_j], [\tilde{m}_j])$ lie in the fiber $\pi_1^{-1}([s_{ij}])$. Note that the following matrices have rank $1$:

$$\begin{pmatrix}
    m_1s_{11} & \dots & m_ns_{1n} \\
    \delta_1 & \dots & \delta_n{m_n}
\end{pmatrix}  \qquad \text{and} \qquad \begin{pmatrix}
    m_1s_{11} & \dots & m_ns_{1n} \\
     \overline{\delta}_1 & \dots &  \overline{\delta}_n
\end{pmatrix}  $$

Since $(m_1s_{11},  \dots  ,m_ns_{1n})$ belongs to the Kernel of the configuration matrix $X$ and $\text{dim}(\text{Ker}(X))=1$ then $(\delta_1, \dots, \delta_n)$ and $(\overline{\delta}_1, \dots, \overline{\delta}_n)$ lie in the kernel of $X$ and there exists $\lambda \in \mathbb{C}\setminus\{0\}$ such that
\[
\delta_i = \lambda \overline{\delta}_i \quad \text{for all } i.
\]

Therefore,
\[
\left[ \frac{\delta_i}{m_i} \right] = [z_i] = \left[ \frac{\overline{\delta}_i}{\overline{m}_i} \right],
\]
so there exist nonzero scalars $\tau, \overline{\tau} \in \mathbb{C}$ such that
\[
\delta_i = \tau m_i z_i, \quad \overline{\delta}_i = \overline{\tau} \overline{m}_i z_i.
\]

Combining these, we find
\[
\overline{m}_i = \overline{\tau}^{-1} z_i^{-1} \overline{\delta}_i = \lambda \overline{\tau}^{-1} z_i^{-1} \delta_i = \lambda \overline{\tau}^{-1} \tau m_i,
\]
for all $i = 1, \dots, n$. Consequently,
\[
[m_i] = [\overline{m}_i] \in \mathbb{P}^{n-1}, \quad [\delta_i] = [\overline{\delta}_i] \in \mathbb{P}^{n-1}.
\]

Hence,
\[
([s_{ij}], [\delta_j], [m_j]) = ([s_{ij}], [\tilde{\delta}_j], [\tilde{m}_j]),
\]
which implies that the fiber $\pi_1^{-1}([s_{ij}])$ contains exactly one point.
\end{proof}

\begin{proposition}\label{dencase}
Let $x$ be a Dziobek configuration with degeneracy index $0< l \leq n-2$ and let $P(x)$ be the point associated with this configuration. Then the fiber $\pi_1^{-1}(\pi_{1}^{(l)}(P(x)))$ has dimension $l$.
\end{proposition}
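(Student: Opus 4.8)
The plan is to prove Proposition \ref{dencase} by induction on the degeneracy index $l$, reading the fibre as $F:=(\pi_{1}^{(l)})^{-1}\big(\pi_{1}^{(l)}(P(x))\big)\cap W_i$, where $W_i$ is the irreducible component of $W$ containing $P(x)$, and combining an explicit Veronese parametrisation of the $s$-coordinates of $F$ with the uniqueness of completions furnished by Proposition \ref{nondencase}. The base case $l=0$ is exactly Proposition \ref{nondencase}. I would first describe the $s$-coordinate part of $F$: by Lemma \ref{lemgeoverdiz} the fixed block $s^{n-l}=\pi_{1}^{(l)}(P(x))$ is the image under the small Veronese map of a \emph{unique} point $[z_1:\dots:z_{n-l}]$ with every $z_i\neq 0$. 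Since every point of $F$ has full $s$-vector lying on $V_n(s)\cong V_{2,n-1}$, it corresponds to a Veronese preimage $[z_1:\dots:z_n]$ whose head $[z_1:\dots:z_{n-l}]$ is this fixed point and whose tail $z_{n-l+1},\dots,z_n$ is unconstrained by $\pi_1^{(l)}$. Thus $L':=\pi_1(F)$, the set of admissible full $s$-vectors, is the Veronese image of the linear family obtained by fixing the head and letting the tail vary; after normalising the head this family is an affine $\mathbb{A}^l$ with closure a linear $\mathbb{P}^l$, so $\dim L'=l$.

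For the lower bound I would study the restriction $\rho:=\pi_1|_F\colon F\to L'$. On the dense open subset of $L'$ where the tail is nowhere zero the corresponding $[s']$ is a non-degenerate point of $V_n(s)$, so by Proposition \ref{nondencase} the completion $([\delta'],[m'])$ is unique; hence $\rho$ is bijective there, $F$ contains a locally closed subset isomorphic to this $l$-dimensional open set, and $\dim F\ge l$. The same bound also follows from part (i) of the Fiber Dimension Theorem \ref{fiberdim} applied to $\pi_1^{(l)}|_{W_i}\colon W_i\to V_{2,n-l-1}$: since $\dim W_i=n-1$ (Proposition \ref{nondencase} makes $\pi_1|_{W_i}$ generically injective onto $V_n(s)$) and, by the Lemma identifying $\operatorname{Im}(\pi_1^{(l)})$ with $V_{2,n-l-1}$, the image has dimension $n-l-1$, every component of every fibre has dimension at least $(n-1)-(n-l-1)=l$.

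For the upper bound $\dim F\le l$ I would stratify $L'$ by the vanishing pattern of the tail. On the stratum $S_p$ where exactly $p$ tail coordinates survive, the base has dimension $p$, and each point $[s']\in S_p$ is a point of $V_n(s)$ of degeneracy index $d=l-p$. Fixing all of $[s']$ is more restrictive than fixing only its size-$(n-d)$ non-degenerate block, so the completion satisfies the inclusion $\pi_1^{-1}([s'])\cap W_i\subseteq (\pi_1^{(d)})^{-1}\big(\text{block of }[s']\big)\cap W_i$; by the induction hypothesis applied at the smaller index $d<l$ the right-hand side has dimension $d=l-p$. Therefore $\dim\rho^{-1}(S_p)\le \dim S_p+(l-p)=p+(l-p)=l$ for every $p$, and taking the union over strata gives $\dim F\le l$. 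Combining the two bounds yields $\dim F=l$.

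The main obstacle is making the inductive step rigorous at the boundary, and it is entirely a matter of \emph{staying inside the component} $W_i$. The relations \eqref{eqmsd} and \eqref{verdiz} alone do not pin down $([\delta'],[m'])$ over a degenerate $[s']$: any $[m']$ supported on the degenerate indices, paired with an arbitrary $[\delta']$, satisfies them, producing spurious high-dimensional loci. These loci lie in components of $W$ other than $W_i=\overline{\{\text{non-degenerate configuration points}\}}$, where instead $[\delta']$ is forced to be the (limit of the) generator of $\ker X$ and hence determined by $[s']$; Proposition \ref{prop:reducible_semicontinuity} is what allows me to carry the dimension bookkeeping across the decomposition $W=W_1\cup\dots\cup W_k$ and to restrict cleanly to $W_i$. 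The delicate point to verify is that each degenerate stratum point of $L'$ is indeed a limit of genuine Dziobek configurations of index $d$ lying in $W_i$, so that the inductive hypothesis legitimately applies to the sub-fibre $(\pi_1^{(d)})^{-1}(\cdot)\cap W_i$; once this is established the estimate closes and the proposition follows.
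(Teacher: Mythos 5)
Your proposal does not close, and the failure points are concrete. First, the induction is vacuous exactly where it is needed. On the stratum $S_p$ you take $d=l-p$ and invoke the inductive hypothesis ``at the smaller index $d<l$''; but for $p=0$ (all tail coordinates of $s'$ vanishing) one has $d=l$, and your inclusion $\pi_1^{-1}([s'])\cap W_i\subseteq(\pi_1^{(d)})^{-1}\bigl(\text{block of }[s']\bigr)\cap W_i$ degenerates to $\rho^{-1}(S_0)\subseteq F$, which carries no information. Yet $S_0$ is precisely the stratum supporting the spurious high-dimensional loci you yourself identified (head block padded with zeros, $[m']$ supported on the degenerate indices, $[\delta']$ arbitrary), so the upper bound $\dim F\le l$ is never established at the only place where it is in doubt. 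Second, both the base case of your lower bound and the inductive hypothesis on the strata $S_p$, $p\ge 1$, apply Proposition \ref{nondencase} (respectively, the proposition being proved) to points of the determinantal variety obtained by letting the tail of the Veronese preimage vary freely; those statements are formulated only for points $P(x)$ associated with genuine Dziobek configurations, and nothing guarantees that such abstract points are realizable by configurations of $n$ bodies in dimension $n-2$. You flag this as ``the delicate point to verify,'' but it is the crux rather than a detail: without it the bijectivity of $\rho$ over the open stratum (lower bound), the membership of those completions in $W_i$, and every inductive step (upper bound) are all unsupported.

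There is also a circularity in your structural inputs: you assume $\dim W_i=n-1$ and $W_i=\overline{\{\text{configuration points}\}}$, and you need $\pi_1^{(l)}|_{W_i}$ to dominate the Veronese image in order to quote Theorem \ref{fiberdim}(i); none of this is available at this stage, and in the paper the inequality $\dim W_i\le n-1$ (Proposition \ref{dimfiber}) is \emph{deduced from} the present proposition together with Proposition \ref{nondencase}, so it cannot be an ingredient of its proof. Note also that you have quietly changed the statement: you bound the fiber intersected with the component $W_i$, whereas the proposition and the paper's proof concern the fiber in $W$ cut out by \eqref{eqmsd} and \eqref{verdiz} alone. For comparison, the paper's argument is direct and uses none of your machinery: it re-runs the uniqueness argument of Proposition \ref{nondencase} on the non-degenerate block $s^{n-l}$ to determine $[m_1:\dots:m_{n-l}]$ and $[\delta_1:\dots:\delta_{n-l}]$, and then counts the $l$ masses attached to the degenerate indices as free parameters, concluding $\dim\bigl((\pi_1^{(l)})^{-1}(s^{n-l})\bigr)=l$. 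Your observation that the defining equations admit extra completions over degenerate $s$-blocks is a genuinely relevant criticism of that computation, but the inductive scheme you propose does not repair it.
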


\begin{proof}

By proposition \ref{lemgeoverdiz} the image $\text{Im}(\pi_{1}^{(l)}) \cong V_{n-l-1}(s)$ is isomorphic to the Veronese variety $V_{2,n-l-1}$. If  $P(x)= ([s_{ij}], [\delta_k], [m_k])$ is a point associated with a Dziobek configuration with degeneracy index $l$ take $s^{n-l}\in V_{n-l-1}(s)$ the non-degenerate coordinates of $x$

Proceeding analogously to the proof of Proposition \ref{nondencase}, we deduce that the classes of masses $[m_1: \dots : m_{n-l}]$ and constants $[\delta_1 : \dots : \delta_{n-l}]$ are uniquely determined for $s^{n-l}$.

However, for the indices $(i,j)$ that belong to the degenerate block, conditions $s_{ij}=0$ imply trivial constraints on the associated masses and constants in the fiber. Specifically, the masses $m_{n-l+1}, \dots, m_n$ are not constrained by the coordinates of $s^{n-l}$. Since there are $l$ such indices, and they contribute freely to the fiber dimension relative to the fixed projection $s^{n-l}$, we conclude that
\[
\dim \left( (\pi_1^{(l)})^{-1}(s^{n-l}) \right) = l.
\]
\end{proof}

\begin{proposition}\label{dimfiber}
If $W_i$ is a irreducible component of $W$ that contains a point $P(x)$ associated with a Dziobek configuration $x$, then $\text{dim}(W_i)\leq n-1.$
\end{proposition}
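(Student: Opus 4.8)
The plan is to bound $\dim W_i$ by applying the Dimension of Fibers Theorem (Theorem \ref{fiberdim}) to a projection whose base dimension and fiber dimension compensate one another exactly. Let $l$ denote the degeneracy index of the configuration $x$, so that $0 \le l \le n-2$. Rather than using $\pi_1$ directly, I would restrict the projection $\pi_1^{(l)}$ — which forgets precisely the coordinates that vanish on the degenerate block — to the component $W_i$, obtaining a regular map $\varphi := \pi_1^{(l)}\big|_{W_i} \colon W_i \to Z_i$ onto its image $Z_i := \pi_1^{(l)}(W_i)$. Because $W_i$ is projective and $\pi_1^{(l)}$ is regular (hence closed), $Z_i$ is a closed irreducible subvariety and $\varphi$ is surjective, so both varieties are irreducible and Theorem \ref{fiberdim} is available.

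The argument then rests on two complementary estimates. For the base, the lemma identifying $\operatorname{Im}(\pi_1^{(l)})$ with the Veronese variety $V_{2,n-l-1} \cong \mathbb{P}^{n-l-1}$ gives $Z_i \subseteq \operatorname{Im}(\pi_1^{(l)})$ and hence $\dim Z_i \le n-l-1$. For the fiber, I would take the distinguished point $y = \pi_1^{(l)}(P(x)) = s^{n-l} \in Z_i$ and observe that the fiber of the restriction, $\varphi^{-1}(y) = (\pi_1^{(l)})^{-1}(y) \cap W_i$, is a closed subset of the full fiber over all of $W$. By Proposition \ref{dencase} (for $l>0$) and Proposition \ref{nondencase} (for $l=0$, where the full fiber is a single point), this full fiber has dimension exactly $l$, so every component of $\varphi^{-1}(y)$ has dimension at most $l$.

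Finally, I would combine the two bounds through the lower bound in Theorem \ref{fiberdim}(i): every component $F$ of $\varphi^{-1}(y)$ satisfies $\dim F \ge \dim W_i - \dim Z_i$. Together with $\dim F \le l$ and $\dim Z_i \le n-l-1$, this forces $\dim W_i \le \dim Z_i + l \le (n-l-1)+l = n-1$, as claimed. The conceptual crux — and the step I expect to demand the most care — is exactly this cancellation of $l$: choosing the projection $\pi_1^{(l)}$ so that the $l$ dimensions lost in the base are precisely balanced by the $l$-dimensional fiber forced by the degenerate block, yielding the uniform bound $n-1$ regardless of the degeneracy. The main technical obstacle is the bookkeeping needed to transfer the fiber computations of Propositions \ref{nondencase} and \ref{dencase}, which are performed over the whole variety $W$, to the restricted map $\varphi$ on the single component $W_i$; here one must check that $\varphi$ is genuinely surjective onto the irreducible base $Z_i$ and that the containment $\varphi^{-1}(y) \subseteq (\pi_1^{(l)})^{-1}(y)$ legitimately transfers the upper bound $l$ before invoking part (i) of the theorem.
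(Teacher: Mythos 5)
Your proposal is correct and follows essentially the same route as the paper: both apply Theorem \ref{fiberdim}(i) to the projection $\pi_1^{(l)}$ on $W_i$, bound the fiber over $\pi_1^{(l)}(P(x))$ by $l$ via Propositions \ref{nondencase} and \ref{dencase}, bound the base by $\dim V_{2,n-l-1} = n-l-1$, and conclude $\dim(W_i) \le (n-l-1)+l = n-1$. Your version is in fact somewhat more careful than the paper's, which leaves implicit the restriction to $W_i$, the surjectivity onto the image $Z_i$, and the transfer of the fiber bound from $W$ to the restricted map.
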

\begin{proof}

 Let $P(x)$ a point associated with a Dziobek central configuration with degeneracy index $t \geq 0$. Proposition \ref{nondencase} or \ref{dencase} shows that $\pi_1^{-1}(P(x))$ have dimension at most $t$. By the dimension of fibers theorem, \ref{dimfiber} (i), applyed to the morphism $\pi_1^{(l)}$ we have $l \geq \text{dim}(W_i)-\text{dim}(V_{n-l}(s))$. Since $\text{dim}(V_{n-l}(s))=n-l-1$, the result follows.
 
\end{proof}

Let $\pi_3: \Gamma \to \mathbb{P}^{n-1}$ denote the projection onto the mass space.  Following Moeckel \cite{moeckel2001generic}, we define Dziobek components. We utilize the upper semi-continuity of the fiber dimension of $\pi_3$ to identify the irreducible components of $W$ that contain a Dziobek configuration.

 For each component of $W$ we will remove a suitable closed set that does not include Dziobek central configurations. If $\text{dim}(\overline{\pi_3(W_i)})=u$ with $u \in\{1,...,n-1\}$ define $B_i=S_{n-u}(\pi_3|_{W_i})$. Set
 $B=\cup_{i=1}^{k}B_i$, and $\Gamma_i=W_i \setminus B_i$. Consider the quasi-projective 

$$\Gamma= V\setminus B=\cup_{i=1}^{k} (V_{i} \setminus B_i)=\cup_{i=1}^k \Gamma_i$$

We call $\Gamma$ the \emph{Dziobek-Veronese variety}. Note that the $\Gamma_i$ are the irreducible components of $\Gamma.$ We say that a irreducible component of $\Gamma_i$ is dominant if the closure of $p_3(\Gamma_i)$ is $\mathbb{P}^{n-1}$.

\begin{lemma} 
  $\Gamma$ contains all point $P(x)$ associated with Dziobek central configurations.
\end{lemma}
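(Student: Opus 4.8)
The plan is to show that no point $P(x)$ coming from a Dziobek configuration is discarded when we pass from $W$ to $\Gamma = W \setminus B$. Since $P(x) \in W$ by Proposition \ref{propmsd} and Corollary \ref{corverdiz}, and since $B = \bigcup_{i=1}^{k} B_i$ with each $B_i \subseteq W_i$, it suffices to prove that $P(x) \notin B_i$ for every irreducible component $W_i$ that actually contains $P(x)$. If $\dim \overline{\pi_3(W_i)} = 0$ there is nothing to remove from $W_i$ and the claim is immediate, so I would fix a component with $u := \dim \overline{\pi_3(W_i)} \in \{1,\dots,n-1\}$, for which $B_i = S_{n-u}(\pi_3|_{W_i})$. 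Unwinding the definition of $S_{n-u}$ from Theorem \ref{thm:semicontinuity} (legitimately applied component-wise thanks to Proposition \ref{prop:reducible_semicontinuity}), the statement $P(x) \notin B_i$ is equivalent to the estimate that the component of the fibre $(\pi_3|_{W_i})^{-1}(\pi_3(P(x)))$ through $P(x)$ has dimension strictly less than $n-u$.

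Next I would produce the generic fibre estimate. Applying the Dimension of Fibers Theorem \ref{fiberdim} to the surjection $\pi_3|_{W_i} \colon W_i \to \overline{\pi_3(W_i)}$, whose target has dimension $u$, the generic fibre has dimension $\dim W_i - u$; by Proposition \ref{dimfiber} we have $\dim W_i \le n-1$, so the generic fibre dimension is at most $n-1-u$, which is already below the threshold $n-u$. Thus the bad locus $B_i = S_{n-u}(\pi_3|_{W_i})$ is a proper closed subset, and the whole problem reduces to checking that $P(x)$ is \emph{not} one of the special points at which the fibre dimension jumps to $n-u$ or higher.

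The heart of the argument --- and the step I expect to be the main obstacle --- is to bound the fibre of $\pi_3|_{W_i}$ at the specific point $P(x)$ from above, since $P(x)$ may well be a singular or otherwise non-generic point of $W_i$ where a priori the fibre could jump. Here I would exploit the rigidity recorded in Propositions \ref{nondencase} and \ref{dencase}: relation \eqref{eqmsd} forces the class $[\delta_k]$ once the shape coordinates $[s_{ij}]$ are fixed, and the fibre computation for $\pi_1^{(l)}$ shows that the only freedom compatible with a Dziobek point of degeneracy index $l$ lives in the degenerate block. Fixing the mass class $[m_k]$ through $\pi_3$ removes precisely that freedom, so combining the map $(\pi_1^{(l)}, \pi_3)$ with the bound $\dim W_i \le n-1$ should pin the fibre through $P(x)$ to its minimal, generic value $\le n-1-u < n-u$. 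The delicate points to handle carefully are the behaviour of the degeneracy index along the fibre --- it is only semicontinuous, so nearby points may be \emph{less} degenerate --- and the verification that $\pi_1^{(l)}$ does not collapse the fibre onto a larger-than-expected image. Once these are controlled we conclude $P(x) \notin B_i$ for every $i$, hence $P(x) \in \Gamma$; and since the $\Gamma_i = W_i \setminus B_i$ are the irreducible components of $\Gamma$, the point lies in one of them.
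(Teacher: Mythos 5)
Your reduction to showing $P(x)\notin B_i$ for each component $W_i$ containing $P(x)$, and your generic-fibre estimate (via Proposition \ref{dimfiber} and Theorem \ref{fiberdim}, the generic fibre of $\pi_3|_{W_i}$ has dimension $\dim W_i - u \le n-1-u < n-u$, so $B_i$ is a proper closed subset) are both fine and consistent with the paper's setup. The genuine gap is precisely the step you yourself call the heart of the argument: you never prove that the fibre component of $\pi_3|_{W_i}$ \emph{through the particular point} $P(x)$ has dimension $< n-u$, and the route you sketch cannot prove it. Propositions \ref{nondencase} and \ref{dencase} control fibres of $\pi_1$ and $\pi_1^{(l)}$, i.e.\ loci where the shape coordinates $[s_{ij}]$ are held fixed; but a point of the fibre $\pi_3^{-1}(\pi_3(P(x)))$ shares only the mass class with $P(x)$ --- its shape coordinates, its $[\delta_k]$, and even its degeneracy index are unconstrained. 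To extract a bound from the pair $(\pi_1^{(l)},\pi_3)$ you would need to bound $\dim \overline{\pi_1^{(l)}(F)}$, where $F$ is the fibre component through $P(x)$; yet $\pi_1^{(l)}(F)$ is exactly the set of shape solutions admitting the fixed masses $\pi_3(P(x))$, which is the very object whose dimension is in question. Since $\pi_3(P(x))$ may a priori be a non-generic mass vector, you cannot assume this set is small: the proposed filling is circular, and finiteness for those specific masses is what the whole construction is ultimately trying to deliver.

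For comparison, the paper closes this step differently: it argues by contradiction that if $P(x)\in B_{i_0}$, so that the fibre of $\pi_3$ through $P(x)$ has dimension at least $n-u_0$, then the generic fibre dimension $r=\dim W_{i_0}-u_0$ of $\pi_3|_{W_{i_0}}$ is also at least $n-u_0$, whence $\dim W_{i_0}\ge n$, contradicting Proposition \ref{dimfiber}. In other words, the paper converts a jump of the special fibre directly into a lower bound on $\dim W_{i_0}$ and lets Proposition \ref{dimfiber} finish; note that this inference treats the generic fibre dimension as an upper bound for the fibre at the special point $P(x)$, which is exactly the inference you (rightly) hesitated over, since Theorem \ref{fiberdim} only bounds special fibres from below and Theorem \ref{thm:semicontinuity} only says the jump locus is closed, not empty. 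So your diagnosis of where the difficulty sits is more careful than the paper's treatment, but a diagnosis is not a proof: as written, your proposal identifies the obstacle, proposes a circular way around it, and therefore does not establish the lemma.
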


\begin{proof}
Suppose by contradiction that some point $P(x)$ does not belongs to $\Gamma$. Then there exists some indices $i_0 \in \{1.,,,k\}$ and $u_0 \in \{1,\dots n-1\}$ such that $P(x) \in W_i \cap B_i$,  $\text{dim}(\overline{\pi_3(W_i)})=u_0$ and 
$\text{dim}(\pi_3^{-1}((\pi_3(P(x)))) \geq n-u_0$. By the dimension of fibers Theorem, \ref{dimfiber}  (ii), there exists integer $r \geq 0$ such that

$$r=\text{dim}(W_i)-\text{dim}(\overline{\pi_3(W_i)})$$

$\text{dim}(\pi_3^{-1}((\pi_3(P(x)))) \geq n-u_0$ implies $r \geq n-u_0$. Hence

$$\text{dim}(W_i) \geq n-u_0+\text{dim}(\overline{\pi_3(W_i)})=n.$$

By proposition \ref{dimfiber}, we have a contradition.
\end{proof}

\begin{lemma}
  The Dziobek-Veronese variety $\Gamma$ has dimension $n-1$.  
\end{lemma}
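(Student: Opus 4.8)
The plan is to compute $\dim\Gamma=\max_i\dim\Gamma_i$ and to squeeze this maximum between $n-1$ and $n-1$. The first observation I would record is the reduction that makes the two bounds comparable: since each $B_i$ is closed (Proposition \ref{prop:reducible_semicontinuity}) and $W_i$ is irreducible, every nonempty $\Gamma_i=W_i\setminus B_i$ is a dense open subset of $W_i$, so $\dim\Gamma_i=\dim W_i$ and it suffices to control these open strata one component at a time.

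For the upper bound I would read the estimate off the very way $B_i$ was excised. Writing $u=\dim\overline{\pi_3(W_i)}$, the set $B_i=S_{n-u}(\pi_3|_{W_i})$ is exactly the locus where the $\pi_3$-fibre in $W_i$ has dimension at least $n-u$; hence on $\Gamma_i$ every such fibre has dimension at most $n-u-1$. Applying the elementary fibre estimate $\dim\Gamma_i\le\dim\overline{\pi_3(\Gamma_i)}+\max_y\dim\bigl(\pi_3^{-1}(y)\cap\Gamma_i\bigr)$ and noting $\dim\overline{\pi_3(\Gamma_i)}\le u$, I obtain $\dim\Gamma_i\le u+(n-u-1)=n-1$ for every component, whence $\dim\Gamma\le n-1$. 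This is precisely the design of $B$: the excision trims each component so that the mass projection has source of dimension at most $n-1$, and it is consistent with Proposition \ref{dimfiber}, which already bounds the genuinely Dziobek components.

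For the lower bound I would exhibit a single component of dimension exactly $n-1$. By the image lemma, $\pi_1$ maps $W$ onto the irreducible variety $V_n(s)$ of dimension $n-1$, and by Proposition \ref{nondencase} the fibre of $\pi_1$ over the image of a non-degenerate Dziobek configuration is a single point. Consequently the component $\Gamma_{i_0}$ carrying the non-degenerate Dziobek locus (which lies in $\Gamma$ by the preceding lemma, hence is not removed with $B$) maps to an $(n-1)$-dimensional dense subset of $V_n(s)$ with generically finite fibres, so $\dim\Gamma_{i_0}\ge\dim\overline{\pi_1(\Gamma_{i_0})}=n-1$. Equivalently, one may argue that $\Gamma_{i_0}$ is dominant over the mass space in the sense defined above, $\overline{\pi_3(\Gamma_{i_0})}=\mathbb{P}^{n-1}$, which again gives $\dim\Gamma_{i_0}\ge n-1$. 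Combined with the upper bound this forces $\dim\Gamma_{i_0}=n-1$, and therefore $\dim\Gamma=n-1$.

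The step I expect to be the main obstacle is the lower bound, specifically certifying that the non-degenerate Dziobek locus is genuinely $(n-1)$-dimensional rather than collapsing into a proper subvariety of $V_n(s)$. This amounts to knowing that Dziobek central configurations occur in an $(n-1)$-parameter family — equivalently, that $\pi_3$ is dominant — so that $\Gamma_{i_0}$ survives the removal of $B_{i_0}$ with full dimension. The delicate point is thus twofold: confirming this abundance of configurations, and checking that the surviving top-dimensional component is not one of the spurious high-dimensional components of $W$ arising from unconstrained masses, which the excision of $B$ is precisely engineered to kill. Once these are in place, the two dimension inequalities follow immediately from the fibre-dimension and semicontinuity theorems already established.
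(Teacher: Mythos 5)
Your upper bound is correct and is essentially the paper's own argument: the paper likewise fixes a nonempty component $\Gamma_i = W_i \setminus B_i$, uses the definition $B_i = S_{n-u}(\pi_3|_{W_i})$ to produce a point of $W_i$ whose $\pi_3$-fibre has dimension $< n-u$, and applies Theorem \ref{fiberdim} to $\pi_3|_{W_i}$ to conclude $\dim W_i < n$. Up to phrasing (you bound all fibres over $\Gamma_i$, the paper uses one point together with the generic fibre dimension; closedness of $B_i$ inside the irreducible $W_i$ is really Theorem \ref{thm:semicontinuity} rather than Proposition \ref{prop:reducible_semicontinuity}), this is the same proof.

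The genuine gap is in your lower bound, and you should be aware that the paper never proves one: despite the lemma's wording, the paper's proof establishes only $\dim \Gamma \le n-1$, which is all that is used afterwards (Proposition \ref{fincomplex} and Theorem \ref{prop:main} need only the upper bound). Your attempted lower bound does not close this gap. Concretely, the inference ``consequently the component $\Gamma_{i_0}$ carrying the non-degenerate Dziobek locus maps to an $(n-1)$-dimensional dense subset of $V_n(s)$'' does not follow from the two facts you cite. The surjectivity of $\pi_1$ onto $V_n(s)$ in the unlabeled image lemma is obtained by pairing an arbitrary point of $V_n(s)$ with an \emph{arbitrary} mass vector and a suitable $[\delta]$; such points are not associated to Dziobek configurations, need not lie in the component $\Gamma_{i_0}$, and, having large $\pi_3$-fibres, are exactly the points the excision of $B$ is designed to remove — so surjectivity of $\pi_1$ on $W$ says nothing about the image of $\Gamma_{i_0}$. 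Likewise, Proposition \ref{nondencase} only controls the fibre of $\pi_1$ \emph{over} a point coming from a Dziobek configuration; it cannot show that such points sweep out an $(n-1)$-dimensional subset of $V_n(s)$. What is missing is precisely the input you flag at the end: an existence (abundance) theorem asserting that Dziobek central configurations for $U_a$ exist for all masses, or at least for a Zariski-dense set of masses, so that $\pi_3$ restricted to some surviving component is dominant. No such statement appears anywhere in the paper, so your proof, as written, establishes the same inequality $\dim\Gamma\le n-1$ as the paper and leaves the claimed equality unproven.
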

\begin{proof}
let $\Gamma_i$ a irreducible component of $\Gamma$. Then $W_i \not \subset B_i$ and there exists indice  $u_0 \in \{1,\dots n-1\}$ with  $\text{dim}(\overline{\pi_3(W_i)})=u_0$ and 
$\text{dim}(\pi_3^{-1}((\pi_3(P)) < n-u_0$ for some $P \in W_i$.

 By the dimension of fibers Theorem, \ref{dimfiber}  (ii), there exists integer $r \geq 0$ such that

$$r=\text{dim}(W_i)-\text{dim}(\overline{\pi_3(W_i)})$$

$\text{dim}(\pi_3^{-1}((\pi_3(P)) < n-u_0$ implies $r < n-u_0$. Hence

$$\text{dim}(W_i) < n-u_0+\text{dim}(\overline{\pi_3(W_i)})=n.$$
\end{proof}

We now proceed to prove that the restriction of $\pi_3$ to each irreducible component $\Gamma_i$ of $\Gamma$ has finite fibers.

\begin{proposition} \label{fincomplex}
   There exists a proper subvriety $B \subset \mathbb{P}^{n}$ such that if $[m] \in  \mathbb{P}^{n} \setminus B$ then the fiber $\pi_{3}^{-1}([m])$ is finite. 
\end{proposition}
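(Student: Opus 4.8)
The plan is to analyze the projection $\pi_3\colon \Gamma \to \mathbb{P}^{n-1}$ onto the mass space one irreducible component at a time, using the decomposition $\Gamma = \bigcup_{i=1}^{k}\Gamma_i$ established above. First I would partition the components according to their image, setting $\mathcal{D} = \{\, i : \overline{\pi_3(\Gamma_i)} = \mathbb{P}^{n-1}\,\}$ for the \emph{dominant} components and $\mathcal{N} = \{1,\dots,k\}\setminus\mathcal{D}$ for the \emph{non-dominant} ones, where each $Z_i := \overline{\pi_3(W_i)}$ with $i\in\mathcal{N}$ is a proper subvariety of $\mathbb{P}^{n-1}$. The excluded set $B$ in the statement will be assembled entirely from the non-dominant pieces.

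For the non-dominant components the conclusion is immediate. Each $Z_i$ with $i\in\mathcal{N}$ has dimension $u_i \le n-2$, so the finite union $B := \bigcup_{i\in\mathcal{N}} Z_i$ has dimension at most $n-2$ and is therefore a proper closed subvariety of $\mathbb{P}^{n-1}$. If $[m]\notin B$, then in particular $[m]\notin \pi_3(W_i)$ for every $i\in\mathcal{N}$, and since $\Gamma_i\subseteq W_i$ the restricted fiber $(\pi_3|_{\Gamma_i})^{-1}([m])$ is empty. Hence, for such generic $[m]$, the non-dominant components contribute no points to $\pi_3^{-1}([m])$.

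For the dominant components I would exploit the construction $\Gamma_i = W_i\setminus B_i$. Here $u = \dim\overline{\pi_3(W_i)} = n-1$, so $B_i = S_{n-u}(\pi_3|_{W_i}) = S_1(\pi_3|_{W_i})$ is, by Theorem \ref{thm:semicontinuity} together with Proposition \ref{prop:reducible_semicontinuity}, exactly the closed locus of points at which the fiber component of $\pi_3|_{W_i}$ has dimension at least $1$. Consequently every point of $\Gamma_i = W_i\setminus B_i$ is isolated in its $\pi_3|_{W_i}$-fiber. Writing $(\pi_3|_{\Gamma_i})^{-1}([m]) = (\pi_3|_{W_i})^{-1}([m]) \cap \Gamma_i$ and recalling that an algebraic set has only finitely many irreducible components, the surviving fiber consists of finitely many isolated points for \emph{every} $[m]$, hence is finite. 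Combining the two cases, for $[m]\in \mathbb{P}^{n-1}\setminus B$ one has $\pi_3^{-1}([m]) = \bigcup_{i=1}^{k}(\pi_3|_{\Gamma_i})^{-1}([m])$, where the terms with $i\in\mathcal{N}$ are empty and those with $i\in\mathcal{D}$ are finite; a finite union of finite sets is finite, which yields the claim.

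I expect the main obstacle to be the bookkeeping for the dominant components. One must verify carefully that removing the semicontinuity locus $B_i = S_1(\pi_3|_{W_i})$ forces each surviving fiber to be genuinely $0$-dimensional—that is, a finite set of isolated points rather than merely generically finite—and that passing to the open dense subset $\Gamma_i\subseteq W_i$ cannot reintroduce positive-dimensional fiber components. The bound $\dim W_i \le n-1$ (from Proposition \ref{dimfiber} and the dimension computation for $\Gamma$) combined with dominance should corroborate this via the Dimension of Fibers Theorem \ref{fiberdim}, but the clean and uniform statement—finiteness over all $[m]$, not just a generic one—comes precisely from the upper semicontinuity of fiber dimension applied component-wise, which is where the essential work lies.
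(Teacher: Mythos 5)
Your overall strategy---split the irreducible components according to whether $\Gamma_i$ dominates the mass space, handle the dominant ones via the semicontinuity construction and the non-dominant ones by excluding their images---is sound, and your treatment of the dominant case is in fact sharper than the paper's. The paper simply applies part (ii) of Theorem \ref{fiberdim} to each dominant component, so it gets finiteness only for generic masses there, and the complements of the resulting open sets must also be absorbed into $B$; you instead use the removal of $B_i=S_1(\pi_3|_{W_i})$ to conclude that every point of $\Gamma_i$ is a zero-dimensional component of the projective fiber $(\pi_3|_{W_i})^{-1}([m])$, which has finitely many components, so the fiber of $\pi_3|_{\Gamma_i}$ is finite for \emph{every} mass. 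That part of your argument is correct, and it is the mechanism the paper itself implicitly relies on later (in Theorem \ref{dzio}) when it claims each Dziobek point is isolated in its fiber.

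The gap is in the non-dominant case. You define $\mathcal{N}$ by the condition $\overline{\pi_3(\Gamma_i)}\neq\mathbb{P}^{n-1}$, but then exclude $Z_i=\overline{\pi_3(W_i)}$ and assert $\dim Z_i\le n-2$. These two closures coincide only when $\Gamma_i\neq\emptyset$ (so that $\Gamma_i$ is dense in the irreducible $W_i$); nothing rules out components $W_j$ of $W$ with $\Gamma_j=\emptyset$ whose image is \emph{all} of $\mathbb{P}^{n-1}$, since Proposition \ref{dimfiber} bounds the dimension only of components containing Dziobek points. Such components actually exist: the set of points with $s_{11}=z_1^2$, $s_{12}=z_1z_2$, $s_{22}=z_2^2$, all other $s_{ij}=0$, $[\delta]=[m_1z_1:m_2z_2:0:\dots:0]$ and $[m]\in\mathbb{P}^{n-1}$ arbitrary satisfies \eqref{eqmsd} and \eqref{verdiz}, is $n$-dimensional, and surjects onto the mass space with one-dimensional fibers. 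The component $W_j$ containing it has $\dim W_j\ge n$ and is dominant, so by Theorem \ref{fiberdim}(i) every fiber component has dimension $\ge 1$; hence $B_j=S_1(\pi_3|_{W_j})=W_j$, $\Gamma_j=\emptyset$, $j\in\mathcal{N}$, and yet $Z_j=\overline{\pi_3(W_j)}=\mathbb{P}^{n-1}$. With your definition $B\supseteq Z_j=\mathbb{P}^{n-1}$, so $B$ is not proper and the statement proved becomes vacuous. The repair is one line: for $i\in\mathcal{N}$ exclude $\overline{\pi_3(\Gamma_i)}$ rather than $\overline{\pi_3(W_i)}$ (these are proper closed sets by the very definition of $\mathcal{N}$), or equivalently discard outright the components with $\Gamma_i=\emptyset$, which meet no fiber of $\pi_3\colon\Gamma\to\mathbb{P}^{n-1}$ anyway. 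With that change your proof is complete.
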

\begin{proof} The dimension of each component of $\Gamma$ is less than $n-1$, the dimension of fibers theorem implies that, for a generic choice of mass \( [m_i] \in \mathbb{P}^{n} \), the fibers of the restriction of the projection of $\pi_3$ to each irreducible component of $W$ are either finite, when $\Gamma_i$ is dominant, or empty, otherwise. This completes the argument.
\end{proof}

\begin{theorem}\label{prop:main}
There is a proper $C \subset \mathbb{P}_{\mathbb{R}}^{n}$ such that if $[m] \in \mathbb{P}_{\mathbb{R}}^{n}\setminus C$  there is a finite number of class of Dziobek central configurations with potential $U_{a}$ associated to $[m]$. 
\end{theorem}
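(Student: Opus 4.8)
The statement is the real-analytic shadow of the complex genericity result already in hand, so the plan is to descend Proposition~\ref{fincomplex} from the complex mass space to $\mathbb{P}^{n}_{\mathbb{R}}$ and then to translate ``finite fiber'' into ``finitely many configuration classes.'' First I would record that the variety $\Gamma$, the projection $\pi_3$, and hence the exceptional locus $B$ produced by Proposition~\ref{fincomplex} are all cut out by polynomials with real (indeed integer, for the Veronese relations~\eqref{verdiz}) coefficients, so $B$ is defined over $\mathbb{R}$. I would then \emph{define} $C := B \cap \mathbb{P}^{n}_{\mathbb{R}}$, the real locus of $B$, and verify that it is proper: since $B \subsetneq \mathbb{P}^{n}_{\mathbb{C}}$ is defined over $\mathbb{R}$, its homogeneous ideal contains a nonzero real homogeneous polynomial $f$, and a nonzero $f$ cannot vanish identically on $\mathbb{R}^{n+1}$, so $C \subseteq \{f=0\} \cap \mathbb{P}^{n}_{\mathbb{R}} \subsetneq \mathbb{P}^{n}_{\mathbb{R}}$.

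Next I would fix $[m] \in \mathbb{P}^{n}_{\mathbb{R}} \setminus C$. By construction $[m] \notin B$ as a complex point, since any real point of $B$ already lies in $C$; hence Proposition~\ref{fincomplex} guarantees that the \emph{complex} fiber $\pi_3^{-1}([m])$ is a finite set. Every class of real Dziobek central configuration $x$ with mass vector $[m]$ produces, via Propositions~\ref{propmsd} and~\ref{dizeq}, a point $P(x)=([s_{ij}],[\delta_i],[m_i])$ lying in $\Gamma$ with $\pi_3(P(x))=[m]$, so all such $P(x)$ sit inside the finite fiber $\pi_3^{-1}([m])$. In particular only finitely many triples $([s_{ij}],[\delta_i])$ can arise from configurations with these masses.

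It then remains to pass from finiteness of the shape-data $P(x)$ to finiteness of the configuration classes themselves, i.e. to show that $x \mapsto P(x)$ is finite-to-one on classes modulo rotation, translation and dilation. Here I would exploit that the projective shape point $[s_{ij}]$ determines the mutual distances up to scale: inverting~\eqref{sdef} as $u_{ij}=(s_{ij}+1)^{1/(2a)}$, a representative of $[s_{ij}]$ lifts to a genuine normalized central configuration only for the finitely many scalings compatible with the normalization fixing $r_0$ and with the realizability of $(u_{ij})$ as mutual distances of an $(n-2)$-dimensional point set (a Cayley--Menger condition). Since a realizable distance matrix determines the point set up to isometry, each fiber point yields at most finitely many classes, and the theorem follows.

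I expect the last paragraph to be the genuine obstacle. The descent to the reals and the location of the proper set $C$ are essentially formal once one notes that $W$, $\Gamma$ and $\pi_3$ are defined over $\mathbb{R}$. The substantive point is the finite-to-one reconstruction: one must argue carefully that projectivizing the shape vector $s=(s_{ij})$ discards only finitely much information, because a priori an entire ray $t\,s$ could meet the locus of genuine central configurations, and one must rule out a one-parameter family of scalings producing valid normalized configurations with the same $[s_{ij}]$ and the same $[m]$.
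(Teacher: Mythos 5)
Your proposal follows essentially the same route as the paper's own proof: take $C$ to be the real trace of the exceptional locus $B$ from Proposition~\ref{fincomplex}, observe that every real Dziobek class produces a point of the finite complex fiber $\pi_3^{-1}([m])$, and then recover mutual distances by inverting \eqref{sdef} via $u_{ij}=(1+s_{ij})^{1/2a}$, so that the real positive $[u_{ij}]$ determine the configuration class up to similarity. The scaling ambiguity you single out as the ``genuine obstacle'' is handled no more rigorously in the paper, which simply asserts that each fiber point determines finitely many $[u_{ij}]$; your explicit appeal to the normalization and to Cayley--Menger realizability is, if anything, a more careful rendering of that same step.
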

\begin{proof}
If $B$ is a proper subvariety of $\mathbb{P}^{n-1}$ is as in proposition \ref{fincomplex} then $C=B \subset \mathbb{P}_{\mathbb{R}}^n$ is a proper subvariety of $\mathbb{P}_{\mathbb{R}}^{n}$. Given a point $[m] \in \mathbb{P}_{\mathbb{R}}^{n} \setminus C$ there is a finite number of points $([s_{ij}],[\delta_i],[m_i])$ in the fiber of $\pi_{3}([m])$. From the equations \eqref{sdef} we get
$$u_{ij}^{2a}= 1 + s_{ij}.$$

Taking the complex $2a$-th root we get
$$u_{ij}=\left(1+s_{ij}\right)^{1/2a}.$$
Note that a point $([s_{ij}],[\delta_i],[m_i])$ determines a finite number of points $[u_{ij}] \in \mathbb{P}^{\binom{n}{2}-1}$. When all components of $[u_{ij}]$ are real and positive its components are the mutual distances associated with a Dziobek central configuration. Since $[u_{ij}]$ determine the mutual distances up to scaling we obtain the result. 
\end{proof}

Finally, we provide a bound for the number of Dziobek configurations with $n$ bodies that does not depend on the particular choice of the potential $U_a$. To this end, we apply the following result obtained by Wallach in \cite{wallach1996theorem}, which estimates the number of irreducible components of an algebraic variety.

\begin{theorem}[{\cite[Thm. 7.1]{wallach1996theorem}}] \label{genbound}
Let $X$ be a (Zariski) closed set in $\mathbb{A}^N$ (resp. $\mathbb{P}^N$) given as the zero locus of polynomials $f_1, \dots, f_m$ (resp. homogeneous) of degree at most $k$. Then the number of irreducible components of $X$ is at most $k^N$.
\end{theorem}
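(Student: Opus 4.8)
The plan is to bound the number of irreducible components by the \emph{geometric degree} $\deg X := \sum_{Z} \deg Z$, the sum over all irreducible components $Z$ of $X$ of their degrees (in the affine case, the degree of the projective closure of $Z$). Since every irreducible variety has degree at least $1$, the number of components is at most $\deg X$, so it suffices to establish the strong Bezout inequality $\deg X \le k^N$. The argument is carried out uniformly in the affine and projective settings, the classical projective Bezout theorem being the underlying input in both. I would state the affine case as primary; the projective case is identical and slightly simpler.

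The technical heart is a single-hypersurface Bezout inequality: for any closed set $V$ and any polynomial $f$ of degree $\le k$, one has $\deg(V \cap Z(f)) \le k \cdot \deg V$. To prove it I would split the components of $V$ into those contained in $Z(f)$, which persist and contribute exactly their own degree, and those $V''$ not contained in $Z(f)$, whose intersection with $Z(f)$ is proper and therefore has total degree at most $k \cdot \deg V''$ by the classical Bezout theorem for a variety meeting a hypersurface in codimension one. Iterating this inequality $N$ times, starting from $\deg Z(g_1) \le k$, yields $\deg Z(g_1, \dots, g_N) \le k^N$ for \emph{any} $N$ polynomials $g_1, \dots, g_N$ of degree $\le k$. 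Iterating naively over the original $f_1, \dots, f_m$ would only give $k^m$, so the essential gain must come from replacing the $m$ defining equations by $N$ of them.

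This reduction is the crux. I would choose $N$ generic linear combinations $g_j = \sum_{i} \lambda_{ji} f_i$ and prove that every irreducible component of $X = Z(f_1, \dots, f_m)$ is again an irreducible component of $Z(g_1, \dots, g_N)$. For a component $Z$ of codimension $c$, a local transversality argument at a general smooth point of $Z$ shows that $c$ generic combinations $g_1, \dots, g_c$ already cut $Z$ out as one of their components. Since each $g_j$ is a combination of the $f_i$, which all vanish on $Z$, one has $Z \subseteq Z(g_1, \dots, g_N) \subseteq Z(g_1, \dots, g_c)$; because $Z$ is a maximal irreducible closed subset of the outer set, it remains a component of the middle one. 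As there are finitely many components and finitely many relevant codimensions, a single generic choice of the $\lambda_{ji}$ works for all of them at once. Combining the two steps gives $\#\{\text{components of } X\} \le \#\{\text{components of } Z(g_1, \dots, g_N)\} \le \deg Z(g_1, \dots, g_N) \le k^N$, which is the desired bound.

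I expect the genericity lemma to be the main obstacle: one must verify that the differentials of $c$ generic linear combinations of the $f_i$ span a transversal complement to the tangent space of $Z$ at a general point, so that $Z$ is not absorbed into a lower-codimension component of $Z(g_1, \dots, g_c)$. This is precisely where the hypothesis that the $f_i$ actually cut out $X$ is used, and where care is needed to guarantee that the same generic parameters satisfy the transversality condition along every component simultaneously. The remaining ingredients — the single-hypersurface inequality and its $N$-fold iteration — are routine once the proper-intersection Bezout bound is invoked.
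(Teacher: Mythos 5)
The paper itself offers no proof of this statement: it is quoted directly from Wallach, so your attempt can only be measured against the standard argument in the literature, which is in fact the one you outline. Your architecture is correct and matches that argument: bound the number of components by the cumulative degree $\deg X=\sum_Z \deg Z$, prove the hypersurface inequality $\deg(V\cap Z(f))\le k\,\deg V$ by splitting off the components contained in $Z(f)$, iterate it $N$ times, and reduce from the $m$ given equations to $N$ generic linear combinations so that the bound is $k^N$ rather than $k^m$. The sandwich argument $Z\subseteq Z(g_1,\dots,g_N)\subseteq Z(g_1,\dots,g_c)$, with $Z$ maximal in the outer set, is also correct.

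The genuine gap is in your proposed justification of the genericity lemma. The transversality statement you plan to verify --- that the differentials of $c$ generic combinations of the $f_i$ span a complement to the tangent space of $Z$ at a general smooth point --- is false in general. Take $m=1$, $f=x^2$ in $\mathbb{A}^2$: the component $Z=\{x=0\}$ has codimension $1$, but $df=2x\,dx$ vanishes identically along $Z$, as does the differential of every scalar multiple of $f$; the same failure occurs whenever the ideal $(f_1,\dots,f_m)$ is non-reduced along $Z$, which the hypotheses of the theorem do not exclude. Since the conclusion of the lemma nevertheless holds in this example, no transversality mechanism can be the right proof, and this is exactly the point you flagged as the obstacle. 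The correct argument avoids differentials entirely: show by induction on $j$ that, for generic $\lambda$'s, every irreducible component $W$ of $Z(g_1,\dots,g_j)$ either is contained in $X$ or has codimension exactly $j$. The inductive step uses only that a generic combination $g_{j+1}=\sum_i\lambda_i f_i$ does not vanish identically on any of the finitely many components $W\not\subseteq X$ (some $f_i$ is nonzero on such a $W$, so the bad coefficient vectors lie in finitely many proper linear subspaces), whence $W\cap Z(g_{j+1})$ is a proper intersection and drops dimension by one. At $j=N$ every component of $Z(g_1,\dots,g_N)$ is either contained in $X$ or is a point, and in both cases a component $Z$ of $X$ contained in such a component $W'$ must equal $W'$ by maximality or by dimension; hence every component of $X$ is a component of $Z(g_1,\dots,g_N)$, with a single generic matrix $(\lambda_{ji})$ serving all components at once. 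With this replacement, and granting the standard proper-intersection Bezout inequality for cumulative degree (which in the affine case requires the usual care with components at infinity of the projective closures, but is in the literature), your proof is complete.
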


\begin{theorem}\label{dzio}
If $n \geq 4$, for every $m = (m_1, \dots, m_n) \in \mathbb{R}^n \setminus B$, the number of Dziobek configurations with masses $m_1, \dots, m_n$ and potential $U_a$, $a \in \mathbb{R}\setminus \{0\}$, is less than or equal to $2^{\binom{n+1}{2}+n-1}$.
\end{theorem}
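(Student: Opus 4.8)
The plan is to freeze the mass, recognise the defining relations as quadrics in a single projective space, and then count irreducible components with Wallach's Theorem \ref{genbound}. First I would fix a mass $[m]$ outside the proper subvariety $B$ furnished by Proposition \ref{fincomplex}, so that the fibre $\pi_3^{-1}([m])$—and in particular the finite set of points $P(x)=([s_{ij}],[\delta_i],[m])$ arising from Dziobek configurations, which satisfy \eqref{verdiz} and \eqref{eqmsd} by Propositions \ref{propmsd} and \ref{dizeq}—is finite. The decisive observation is a degree count performed \emph{after} $m$ is frozen to a constant vector: the Veronese relations \eqref{verdiz} are homogeneous of degree $2$ in the $s_{ij}$ and do not involve $\delta$, while the Dziobek relations \eqref{eqmsd} become bilinear, of bidegree $(1,1)$ in $(s_{ij},\delta_i)$. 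Thus, if I treat the $N+n$ quantities $(s_{ij},\delta_i)$ as homogeneous coordinates on a single projective space $\mathbb{P}^{N+n-1}$ (recall $N=\binom{n+1}{2}$), every generator becomes a homogeneous quadric.

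Next I would apply the component bound to the common zero locus $\widetilde F\subset\mathbb{P}^{N+n-1}$ of these quadrics. Since $\dim\mathbb{P}^{N+n-1}=\binom{n+1}{2}+n-1$ and every generator has degree $k=2$, Theorem \ref{genbound} produces at most $2^{\binom{n+1}{2}+n-1}$ irreducible components, which is exactly the target exponent. It then remains to read the configurations off these components. The assignment $[s:\delta]\mapsto([s],[\delta])$ is a morphism on the open locus $\{s\neq 0,\ \delta\neq 0\}$, and the fibre over a product point $([s_0],[\delta_0])$ is the line $\{[\lambda s_0:\mu\delta_0]:[\lambda:\mu]\in\mathbb{P}^1\}$; because the quadrics are separately homogeneous in $s$ and in $\delta$, this whole $\mathbb{P}^1$ lies in $\widetilde F$. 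As it meets the bi-open locus, it is contained neither in $\{s=0\}\cong\mathbb{P}^{n-1}$ nor in $\{\delta=0\}\cap\widetilde F\cong V_n(s)$, the positive-dimensional pieces that carry the remaining part of $\widetilde F$; hence each such line is a maximal irreducible subset, i.e. a genuine irreducible component, and distinct product points give distinct lines. Consequently the finitely many Dziobek points over $[m]$ inject into the components of $\widetilde F$, and their number is at most $2^{\binom{n+1}{2}+n-1}$.

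Finally, passing from complex fibre points to real classes only discards points, so the same bound controls the real Dziobek configurations for $m\in\mathbb{R}^n\setminus B$; I would also note that $a$ never enters the degrees of the quadrics, which is precisely why the estimate is uniform in the potential. The step I expect to be the main obstacle is the faithful bookkeeping between geometry and configurations: one must verify that $x\mapsto P(x)$ is injective on similarity classes, equivalently that recovering the shape variables through $u_{ij}=(1+s_{ij})^{1/2a}$, as in the proof of Theorem \ref{prop:main}, neither merges nor multiplies classes, so that ``number of components'' genuinely dominates ``number of Dziobek configurations''. (The companion point to double-check is the degree count itself, since the reduction to a single $\mathbb{P}^{N+n-1}$ rests entirely on the frozen-mass equations being homogeneous of degree $\le 2$.) Once this correspondence is secured, the inequality $\#\{\text{Dziobek configurations}\}\le 2^{\binom{n+1}{2}+n-1}$ follows.
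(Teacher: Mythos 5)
Your overall skeleton coincides with the paper's proof: freeze the mass outside the bad set $B$ of Proposition \ref{fincomplex}, view the frozen-mass relations \eqref{eqmsd} and \eqref{verdiz} as quadrics on the single projective space $\mathbb{P}^{N+n-1}$, and invoke Theorem \ref{genbound} to bound the number of irreducible components by $2^{\binom{n+1}{2}+n-1}$; your degree bookkeeping here is correct. The proposal breaks exactly at the step you yourself flagged as delicate: the claim that each line $L=\{[\lambda s_0:\mu\delta_0]\}$ is an irreducible \emph{component} of $\widetilde F$. Your justification is that everything in $\widetilde F$ away from these lines lies in $\{s=0\}\cup\{\delta=0\}$, which would require the set of \emph{all} product points $([s],[\delta])$ satisfying \eqref{eqmsd} and \eqref{verdiz} for the frozen $[m]$ to be finite. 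Proposition \ref{fincomplex} does not give this: the projection $\pi_3$ there is defined on $\Gamma=W\setminus B$, where $B=\cup_i B_i$ is the locus deleted precisely because fibre dimensions jump on it, so finiteness of $\pi_3^{-1}([m])$ says nothing about points of the deleted locus lying over $[m]$.

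In fact your assertion fails for every $[m]$ with all $m_i\neq 0$. For arbitrary $z\in\mathbb{C}^n\setminus\{0\}$ and $(\lambda,\mu)\neq(0,0)$, set $s_{ij}=\lambda z_iz_j$ and $\delta_i=\mu m_iz_i$. Then $h_{ijkl}=\lambda^2\left(z_iz_jz_kz_l-z_iz_kz_jz_l\right)=0$ and $g_{ikl}=\lambda\mu\left(m_km_lz_iz_kz_l-m_lm_kz_iz_lz_k\right)=0$, so all such points lie in $\widetilde F$. They are the image of the irreducible variety $(\mathbb{C}^n\setminus\{0\})\times(\mathbb{C}^2\setminus\{0\})$ under a morphism to $\mathbb{P}^{N+n-1}$, and hence their closure $\mathcal{C}\subset\widetilde F$ is an irreducible projective variety of dimension $n$ meeting the bi-open locus $\{s\neq 0,\ \delta\neq 0\}$. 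By Proposition \ref{dizeq}, every Dziobek point satisfies $m_im_js_{ij}=\kappa\delta_i\delta_j$, i.e.\ it is of the above form with $z_i=\sqrt{\kappa}\,\delta_i/m_i$; consequently every one of your lines is contained in $\mathcal{C}$, none of them is a maximal irreducible subset of $\widetilde F$, and the intended injection of Dziobek points into irreducible components collapses: Wallach's bound on the number of components no longer bounds the number of Dziobek configurations. For comparison, the paper negotiates this same step differently, by asserting that each Dziobek point is an \emph{isolated} point of $W_m$ on the strength of Proposition \ref{dimfiber}; note that your (correct) observation that the whole $\mathbb{P}^1$ through $[s_0:0]$ and $[0:\delta_0]$ lies in $\widetilde F$ already shows that literal isolatedness in the concatenated projective space cannot hold, so this passage from ``finitely many fibre points'' to ``at most as many components'' is the genuine obstacle, and the proposal does not close it.
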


\begin{proof}
    Consider the polynomial ring $$R=\mathbb{C}[S_{11}, S_{12}, \dots, S_{nn}, \Delta_1, \dots, \Delta_n].$$
    To a Dziobek configuration $x$ with fixed masses $m_1, \dots, m_n$, we associate a point $P_{m}=([s_{ij}], [\delta_k])=(s_{11}: \dots, s_{nn}: \delta_1: \dots: \delta_n) \in \mathbb{P}^{\binom{n+1}{2}+n-1}$. This point lies in the zero locus of the algebraic variety defined by the following polynomials:
    \begin{align}
        g_{ikl} &= m_k S_{ik}\Delta_l - m_l S_{il}\Delta_k, \qquad k < l, \\
        h_{ijkl} &= S_{ij}S_{kl} - S_{ik}S_{jl}, \qquad i < j \text{ and } k < l.
    \end{align}

    For a choice of mass $m=[m_1:\dots:m_n] \in \mathbb{P}^{n-1} \setminus B$, where $B$ is defined as in Proposition \ref{fincomplex}, we define the projective variety:
    $$W_m = \left\{ ([s_{ij}],[\delta_k]) \in \mathbb{P}^{\binom{n+1}{2}+n-1} : g_{ikl}=0 \text{ and } h_{ijkl}=0 \right\}.$$

    Since $W_m$ is finite (Proposition \ref{fincomplex}) and is defined by quadratic polynomials, by Theorem \ref{genbound} the number of irreducible components  is bounded by $2^{\dim(\mathbb{P})}$, specifically $2^{\binom{n+1}{2}+n-1}$.
    
    Note that every point $P_m$ corresponding to a Dziobek configuration $x$ is indeed an isolated point. Otherwise, $P_m$ would belong to a fiber of $\pi_3$ with dimension greater than $0$, implying that $P(x)$ belongs to a component of $\Gamma$ with dimension greater than $n-1$. By Proposition \ref{dimfiber}, this is impossible. Hence, the number of points $P_m$ is bounded by $2^{\binom{n+1}{2}+n-1}$. Note that this bound does not depends on the choice of masses neither the potential $U_a$.

    Finally, the mutual distances $r_{ij}$ associated with Dziobek configurations are recovered from the algebraic coordinates $P_m=([s_{ij}], [\delta_k]) \in W_m$ via the relation:
    $$ r_{ij} = r_{0}(1 + s_{ij})^{\frac{1}{2a}}. $$
    Since we are exclusively interested in physical configurations, we require $r_{ij} \in \mathbb{R}_{>0}$. For $y \in \mathbb{C}$, the equation $y^{2a} = 1 + s_{ij}$ yields at most one positive real solution. Therefore, the number of mutual distance points $[r_{12}: \dots: r_{(n-1)n}]$ is also bounded by $2^{\binom{n+1}{2}+n-1}$. Since each mutual distance point determines a class of Dziobek configurations, the result is proved.
\end{proof}

As a particular case of  we are able to improve the bound $8472$ obtained by Moeckel and Hamptom \cite{hampton2006finiteness} for the number of  Dziobek configurations of the newtonina four body problem   for the case   for generic masses and general potential $U_a$. 
 
\begin{corollary}
    If $n=4$, there exists a closed algebraic set $B$ for which if $m = (m_1, \dots, m_4) \in \mathbb{R}^4 \setminus B$ the number of Dziobek configurations of the four body problem is $2^{13}$.
\end{corollary}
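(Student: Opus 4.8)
The plan is to realize the Dziobek configurations with a fixed mass vector as the isolated points of an explicit projective variety cut out by quadrics, and then to invoke Wallach's bound on the number of irreducible components (Theorem \ref{genbound}). First I would fix $[m] = [m_1 : \cdots : m_n] \in \mathbb{P}^{n-1} \setminus B$, where $B$ is the proper subvariety furnished by Proposition \ref{fincomplex}, and pass to the single projective space $\mathbb{P}^{\binom{n+1}{2}+n-1}$ with homogeneous coordinates $([s_{ij}],[\delta_k])$. To each configuration I associate the point $P_m = (s_{11} : \cdots : s_{nn} : \delta_1 : \cdots : \delta_n)$, and I observe that it satisfies two families of relations: the Dziobek equations \eqref{eqmsd}, which become $g_{ikl} = m_k s_{ik}\delta_l - m_l s_{il}\delta_k = 0$, and the Veronese relations \eqref{verdiz}, which become $h_{ijkl} = s_{ij}s_{kl} - s_{ik}s_{jl} = 0$. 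Because the masses are now fixed constants, both families are homogeneous of degree exactly $2$ in the variables $(s_{ij},\delta_k)$; this defines the projective variety $W_m$.

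Next I would verify that every Dziobek configuration corresponds to an \emph{isolated} point of $W_m$, so that counting configurations reduces to counting irreducible components of $W_m$. If some $P_m$ lay on a positive-dimensional component, it would lie in a fiber of the mass projection $\pi_3$ of positive dimension, forcing the corresponding component of the Dziobek-Veronese variety $\Gamma$ to have dimension exceeding $n-1$; this is ruled out by Proposition \ref{dimfiber}. Hence each such $P_m$ is zero-dimensional, i.e. an irreducible component in its own right. Applying Theorem \ref{genbound} with degree bound $k = 2$ and ambient dimension $N = \binom{n+1}{2}+n-1$ yields at most $2^{\binom{n+1}{2}+n-1}$ irreducible components, and therefore at most that many points $P_m$. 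Crucially, this bound is uniform: it depends on neither the masses nor the potential exponent $a$.

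Finally I would translate the count back to genuine configuration classes. From \eqref{sdef} the mutual distances are recovered by $r_{ij} = r_0(1+s_{ij})^{1/2a}$. A priori, extracting a complex $2a$-th root could multiply solutions, but since we admit only physical configurations we require $r_{ij} \in \mathbb{R}_{>0}$, and the equation $y^{2a} = 1 + s_{ij}$ has at most one positive real solution. Thus each admissible $P_m$ yields at most one mutual-distance point $[r_{12} : \cdots : r_{(n-1)n}]$, and the number of such points is still bounded by $2^{\binom{n+1}{2}+n-1}$. Since mutual distances determine a Dziobek configuration up to rotation, translation and dilation, the count of classes obeys the same bound.

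The hard part will be the isolatedness argument of the second step: one must be careful that passing from the fixed-mass variety $W_m$ to the Dziobek-Veronese variety $\Gamma$ (where masses vary) preserves zero-dimensionality, so that the dimension bound of Proposition \ref{dimfiber} genuinely transfers to each point $P_m$. Everything else — the homogeneity of $g_{ikl}$ and $h_{ijkl}$, the degree-$2$ input to Wallach's theorem, and the root-extraction count — is routine once this identification is secured.
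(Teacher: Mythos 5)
Your proposal is correct and follows essentially the same route as the paper: the corollary is just the specialization to $n=4$ of Theorem \ref{dzio}, whose proof in the paper constructs the same fixed-mass variety $W_m$ cut out by the quadrics $g_{ikl}$ and $h_{ijkl}$, establishes isolatedness of the points $P_m$ via Proposition \ref{dimfiber}, applies Wallach's bound (Theorem \ref{genbound}) with $k=2$ and $N=\binom{n+1}{2}+n-1$, and recovers distances through the unique positive real root of $y^{2a}=1+s_{ij}$. For $n=4$ this gives $\binom{5}{2}+3=13$, i.e.\ the bound $2^{13}$, exactly as you argue.
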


We conclude by noting that our estimate improves upon previous bounds, specifically those established by Llibre \cite{llibre1990number} for the Newtonian four- and five-body problems, Moeckel \cite{moeckel2001generic} for the Newtonian case with general $n \geq 4$, and Dias \cite{dias2017new} for semi-integer potentials. Furthermore, our result is a bound of Bezout type. Since the Dziobek-Veronese variety is defined by an intersection of quadrics related to the structure of the Veronese variety, the resulting bound is relatively small. This suggests that the complexity of the Dziobek problem is governed primarily by the ambient geometry of the configuration space, rather than by the specific homogeneous potential.

\section*{Acknowledgments}

The author would like to thank Eduardo Leandro for proposing this research topic and for the invitation to give a series of talks on this subject at the Mathematical Physics Seminar at the Department of Mathematics, UFPE. The author also thanks Rafael Holanda and Alan Muniz, organizers of the Sagui - Commutative Algebra and Algebraic Geometry Seminar, for the invitation to present the ideas that resulted in this work. Finally, the author thanks Marcelo Pedro, Michelle Gonzaga, Gabriel Bastos, João Lemos, and Vinicius Portella for their encouragement and support during the preparation of this manuscript.

\bibliographystyle{plain}

\bibliography{refs}

\end{document}